\tikzset{
    labl/.style={anchor=south, rotate=-90, inner sep=.6mm},
    symbol/.style={draw=none, every to/.append style={edge node={node [sloped, allow upside down, auto=false]{$#1$}}}},
    ext/.style={circle, draw, inner sep=1pt},
    int/.style={circle, draw, fill, inner sep=1pt},
    exte/.style={circle, draw, inner sep=3pt},
    inte/.style={circle, draw, fill, inner sep=3pt},
    diagram/.style={matrix of math nodes, row sep=3em, column sep=2.5em, text height=1.5ex, text depth=0.25ex},
    diagram2/.style={matrix of math nodes, row sep=0.5em, column sep=0.5em, text height=1.5ex, text depth=0.25ex},
    every picture/.style={baseline=-.65ex},
    every loop/.style={draw},
    rloop/.style={out=10, in=-10, loop, distance=3em},
    aloop/.style={out=100, in=80, loop, distance=3em}
}
\newcommand{\circled}[1]{%
	\tikz[baseline=(char.base)]\node[draw,circle,inner sep=0.6pt,minimum size=1.3em] (char){#1};}
\definecolor{darkblue}{rgb}{0,0.1,.5}
\newcommand*{\RN}[1]{\expandafter\@slowromancap\romannumeral #1@}
\newcommand{\sbullet}[1][.5]{\mathbin{\vcenter{\hbox{\scalebox{#1}{$\bullet$}}}}}
\newcommand*\rcolon{%
    \nobreak
    \mskip6mu plus1mu
    \mathpunct{}%
    \nonscript
    \mkern-\thinmuskip
    {:}%
    \mskip2mu
    \relax
}
\newcommand{\hr}[2][]{\hyperref[#2]{#1~\ref{#2}}}
\def\R{\mathbb{R}}
\def\Z{\mathbb{Z}}
\def\Q{\mathbb{Q}}
\def\Der{\mathop{\mathrm{Der}}\nolimits}
\def\mor{\mathop{\mathrm{Mor}}\nolimits}
\def\emb{\mathop{\mathrm{Emb}}\nolimits}
\def\imm{\mathop{\mathrm{Imm}}\nolimits}
\def\path{\mathop{\mathrm{Path}}\nolimits}
\def\map{\mathop{\mathrm{Map}}\nolimits}
\def\mor{\mathop{\mathrm{Mor}}\nolimits}
\def\hofib{\mathop{\mathrm{hofib}}\nolimits}
\def\res{\mathop{\mathrm{Res}}\nolimits}
\def\cores{\mathop{\mathrm{coRes}}\nolimits}
\def\coind{\mathop{\mathrm{coInd}}\nolimits}
\def\mc{\mathop{\mathrm{MC}}_{\sbullet}\nolimits}
\def\hgc{\mathop{\mathrm{HGC}}\nolimits}
\def\Com{\mathsf{Com}}
\newcommand{\Graphs}{\mathsf{Graphs}}
\newcommand{\Gcat}{\mathsf{Graph}\mathcal Cat}
\def\MC{\mathop{\mathsf{MC}}\nolimits}
\def\id{\mathds{1}}
\def\sgn{\mathop{\mathrm{sgn}}}
\def\geq{\geqslant}
\newtheorem{theorem}{Theorem}[section]
\newtheorem*{theorem*}{Theorem}
\newtheorem{lemma}[theorem]{Lemma}
\newtheorem{proposition}[theorem]{Proposition}
\newtheorem{corollary}[theorem]{Corollary}
\theoremstyle{definition}
\newtheorem{construction}[theorem]{Construction}
\newtheorem{remark}[theorem]{Remark}
\numberwithin{equation}{section}
\newcommand{\thefuturetheoreminner}{} 
\NewDocumentEnvironment{futuretheorem}{ m o +b }
{
    \renewcommand{\thefuturetheoreminner}{\ref{#1}}
    \IfNoValueTF{#2}
    {\futuretheoreminner}
    {\futuretheoreminner[#2]}
    {\emph{#3}}
}
{
    \endfuturetheoreminner
    \prop_gput:Nnn \g_alevel_future_prop { #1 } { #3 }
    \IfValueT{#2}{ \prop_gput:Nnn \g_alevel_future_prop { #1-attr } { #2 } }
}
\NewDocumentCommand{\pasttheorem}{m}
{
    \prop_if_in:NnTF \g_alevel_future_prop { #1-attr }
    {
        \begin{theorem}[\prop_item:Nn \g_alevel_future_prop { #1-attr }]
    }
    {
        \begin{theorem}
    }
    \label{#1}
    \prop_item:Nn \g_alevel_future_prop { #1 }
    \end{theorem}
}
\renewenvironment{proof}[1][{\itshape Proof}]{{\itshape #1. }}{\qed}
\title{Graphical composition of mapping spaces between modules of configuration-space-type}
\author{Semyon Abramyan}
\email{semyon.abramyan@gmail.com}
\begin{document}

\begin{abstract}
	In embedding calculus, spaces of embeddings are identified with derived mapping spaces between framed Fulton-MacPherson–type modules (framed configuration spaces). Unfortunately, there are no sufficiently good algebraic models for framed Fulton-MacPherson modules that would allow us to explicitly describe the rational homotopy type of the embedding space. Recently there were several attempts to avoid dealing with the framed versions of Fulton-MacPherson modules by considering framed manifolds, e.g. embeddings modulo immersions $\overline{\emb}$ in \cite{f-t-w20}, or embeddings with a deformation of the framing $\widetilde{\emb}$ in \cite{abra25}. In both cases, the rational homotopy type of the corresponding embedding space has an explicit description in terms of graphs (hairy graph complexes).
	
	We construct a combinatorial graphical composition for the composition of embedding spaces $\widetilde{\emb}$. As a step on our way, we describe the action of the coinduction functor on configuration-space-type $e_n^c$-comodules.
\end{abstract}
	
\maketitle
\tableofcontents

\section{Introduction}
In recent years a deep connection between the manifold calculus of functors introduced by Weiss~\cite{weis99} and the operad of little discs $L\mathbb D_m$ was established. Namely, convergence results of Goodwillie-Weiss~\cite{go-we99} (see also~\cite[Theorem~2.1]{turc13}) imply that for a pair of $m$-framed manifolds $M^m$ and $N^n$ with $\dim N - \dim M \geqslant 3$ there are weak equivalences
\[
\emb(M, N) \xrightarrow{\sim}
T_\infty\emb(M, N) \xrightarrow{\sim} \map^h_{\mathrm{mod}\text-\mathcal F_m}(\mathcal F_M, \mathcal F^{m\text{-fr}}_N),
\]
where $\mathcal F_m$, $\mathcal F_M$ and $\mathcal F^{m\text{-fr}}_N$ refer to the Fulton-MacPherson operad of $\R^m$, the Axelrod-Singer-Fulton-MacPherson completion of the configuration space of points on $M$ and its $m$-framed version on~$N$, respectively.
A different (though very similar) incarnation of the embedding functor was studied in \cite{ar-tu14, f-t-w20}. Namely, it was shown that for the embeddings modulo immersions functor
\[
\overline\emb(M, \R^n) \coloneqq \hofib \big(\emb(M, \R^n) \to \imm(M, \R^n)\big)
\]
there are weak equivalences
\[
\overline\emb(M, \R^n) \xrightarrow{\sim}
T_\infty\overline\emb(M, \R^n) \xrightarrow{\sim}
\map^h_{\mathrm{mod}\text-\mathcal F_m}(\mathcal F_M, \mathcal F_n).
\]
The rational homotopy type of the latter space can be described purely combinatorially.

Finally, in the recent preprint~\cite{abra25}, we introduced another embedding functor~$\widetilde{\emb}(M, N)$ for a pair of parallelized manifolds. The space $\widetilde{\emb}(M, N)$ consists of pairs $(f, h)$ of an embedding $f\colon M \to N$ and a deformation~$h$ of the frames from $df$ to the given frame on $N$. Convergence results similar to the above have been proved. Namely, there are weak equivalences
\[
	\widetilde{\emb}(M, N)
	\xrightarrow{\sim}
	T_\infty\widetilde{\emb}(M,N)
	\xrightarrow{\sim}
	\map^h_{\mathrm{mod}-\mathcal F_m}(\mathcal F_M, \mathcal F_N).
\]
Moreover, the following weak equivalence has been proved
\[
	\map^h_{\mathrm{mod}-\mathcal F_m} (\mathcal F_M, \mathcal F_N^\Q)
	\simeq
	\MC_{\sbullet}(\hgc_{A_M, H^{\sbullet}(N), n}^Z),
\]
where $\hgc$ is the hairy graph complex, $A_M$ is a model for $M$, and $n$ is the dimension of $N$ (see \hr[Remark]{remark:hgc for algebras} and \eqref{eq:hgc for algebras}).

This leads us to the following question: how to model "graphically" (combinatorially) the morphism of mapping spaces induced by composition of embeddings
\begin{equation}\label{eq:emb-tilda composition}
	\widetilde{\emb}(M,N) \times \widetilde{\emb}(N, K) \to \widetilde{\emb}(M,K) ?
\end{equation}

In order to answer the question above, in \hr[Section]{sec: graphical category} we introduce a graphical category $\Gcat_n$, whose objects are pairs $(U, Z_u)$ of a graded vector space $U$ and a Maurer-Cartan element $Z_u \in \hgc_{U, n}$, and the mapping space is given by
\[
\map_{\sbullet}\Big((U, Z_u), (V, Z_v)\Big) \coloneqq \MC_{\sbullet}(\hgc_{U,V,n}^{Z_u, Z_v}).
\]
The proof of the fact that there is a natural well-defined composition is the content of \hr[Section]{sec:graphical composition}.

The following is the main theorem of the paper, that, in particular, gives a combinatorial description of the composition above.

\begin{futuretheorem}{main}\label{thm:main-composition}
	There exists a simplicial functor
	\[
		\mathcal F\colon \Gcat_n \to \mathrm{dg\Lambda\!HopfMod}^{c\, op}_{e_n^c},
	\]
	such that
	\[
	\mathcal F\colon (U, Z_u) \mapsto \Graphs_{U,n}^{Z_u}
	\]
	and the composite 
	\begin{equation*}
	\map_{\Gcat_n}\big((U,Z_u), (V, Z_v)\big) \to
	\map_n(\Graphs_{V,n}^{Z_v}, \Graphs_{U,n}^{Z_u})
	\to
	\map^h_n(\Graphs_{V,n}^{Z_v}, \Graphs_{U,n}^{Z_u})
	\end{equation*}
	is a weak equivalence of simplicial sets.
\end{futuretheorem}

By \cite{will23}, the essential image of $\mathcal F$ can be described in the following manner.
\begin{theorem*}[\cite{will23}]
	For every $X \in \mathrm{dg\Lambda\!HopfMod}^{c}_{e_n^c}$ of configuration-space-type, with $H^{\sbullet}(X(1))$ connected and finite dimensional, there is a Maurer-Cartan element $Z \in \MC(\hgc_{\overline{H}^{\sbullet}(X(1))})$ and a weak equivalence
	\[
		\Graphs_{\overline{H}^{\sbullet}(X(1)), n}^Z \simeq X.
	\]
\end{theorem*}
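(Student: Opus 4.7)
The plan is to extract $Z$ by comparing $X$ to the untwisted Graphs comodule $\Graphs_{A, n}^0$, where $A := \overline H^{\sbullet}(X(1))$, and reading off $Z$ from the perturbation needed to deform the standard differential on $\Graphs_{A, n}^0$ into that of $X$. The strategy rests on two inputs: (i) the combinatorial rigidity of configuration-space-type $e_n^c$-comodules at the \emph{underlying graded} level, which is the whole purpose of that hypothesis; and (ii) the fact that the hairy graph complex $\hgc_{A, n}$ is, by design, the dg Lie algebra controlling deformations of the $\Lambda$-Hopf / $e_n^c$-comodule structure on the graded object underlying $\Graphs_{A, n}$.

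First, I would replace $X$ by a cofibrant-fibrant model $\tilde X \xrightarrow{\sim} X$ in the category of configuration-space-type $e_n^c$-comodules, so that the desired weak equivalence is obtained from an honest dg isomorphism on $\tilde X$. Using the connectivity and finite-dimensionality of $A$ together with the configuration-space-type hypothesis, I would then exhibit a graded $\Lambda$-Hopf $e_n^c$-comodule isomorphism $\tilde X \cong \Graphs_{A, n}^0$ of the underlying (non-differential) objects. The point of ``configuration-space-type'' is precisely that each arity $\tilde X(r)$ decomposes as a free graded Hopf algebra on generators labeled by $A$ together with graphical generators supplied by $e_n^c$, so such an identification exists and is unique up to the action of graded automorphisms.

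Transporting $d_{\tilde X}$ across this identification, the perturbation $\delta := d_{\tilde X} - d_{\Graphs_{A,n}^0}$ is a degree $+1$ derivation compatible with the $\Lambda$-, Hopf-, and $e_n^c$-comodule structures. By construction of the hairy graph complex, such derivations correspond bijectively to elements of $\hgc_{A, n}$ of the appropriate cohomological degree, and the identity $(d_{\Graphs^0} + \delta)^2 = 0$ translates into the Maurer-Cartan equation for the corresponding $Z \in \hgc_{A, n}$. By construction, $\tilde X \cong \Graphs_{A, n}^Z$ as dg $e_n^c$-comodules, and composing with $\tilde X \xrightarrow{\sim} X$ yields the required weak equivalence $\Graphs_{A, n}^Z \simeq X$.

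The main obstacle is the graded identification $\tilde X \cong \Graphs_{A, n}^0$: without the configuration-space-type hypothesis the underlying graded object could be arbitrary and no such comparison would exist, while with it the identification is forced up to a contractible choice, but checking compatibility with all of the $\Lambda$-, Hopf-, and $e_n^c$-comodule data simultaneously requires unwinding the definitions carefully and likely proceeds by induction on arity together with a Koszul-type freeness argument for the Hopf generators. Once this graded comparison is in place, the extraction of $Z$ and the verification of the Maurer-Cartan equation are formal, since $\hgc_{A, n}$ was engineered to classify precisely these perturbations.
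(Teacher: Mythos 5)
There is nothing in this paper to compare your argument against: the statement is quoted verbatim from \cite{will23} (it is stated as an attributed \texttt{theorem*} with no proof), so the paper's ``own proof'' is simply a citation. The actual proof in \cite{will23} proceeds by obstruction theory: one constructs a map $\Graphs_{\overline{H}^{\sbullet}(X(1)),n}^{Z}\to X$ (through a suitable resolution) inductively, with the Maurer--Cartan element $Z$ built order by order as the obstructions --- which live in the cohomology of the hairy graph complex --- are killed. That is a genuinely different mechanism from yours.

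The central step of your proposal is a gap. You assert that a fibrant--cofibrant replacement $\tilde X$ admits an isomorphism of \emph{underlying graded} $\Lambda$-Hopf $e_n^c$-comodules with $\Graphs^0_{A,n}$, claiming this is ``the whole purpose'' of the configuration-space-type hypothesis. It is not: configuration-space-type is a condition on the \emph{cohomology} of the $X(r)$ (that it agrees with that of configuration spaces of points, compatibly with the comodule structure), and says nothing about the underlying graded object being free on graphical generators. A fibrant--cofibrant replacement is in general enormous and will not be graded-isomorphic to the combinatorial object $\Graphs_{A,n}$, so there is no ``perturbation $\delta = d_{\tilde X} - d_{\Graphs^0}$'' to extract. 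Relatedly, your claim that structure-compatible degree-one derivations of $\Graphs^0_{A,n}$ \emph{biject} with elements of $\hgc_{A,n}$ overstates what is true: $\hgc_{A,n}$ maps to the derivation/deformation complex and computes it up to quasi-isomorphism, which is exactly why the correct argument must work up to homotopy, order by order, rather than on the nose. The last paragraph of your proposal correctly identifies the graded identification as the main obstacle, but the suggested ``induction on arity plus Koszul-type freeness'' would have to be replaced by the obstruction-theoretic construction of the map itself, which is where all the work in \cite{will23} lies.
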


Moreover, we give a description of the coinduction functor
\[
\coind_n^m\colon \mathrm{dg\Lambda\!HopfMod}^{c}_{e_m^c} \to \mathrm{dg\Lambda\!HopfMod}^{c}_{e_n^c}.
\]
restricted to the above essential image.

\begin{futuretheorem}{thm:coinduction graphs}
	If $n-m \geqslant 2$ and $m \geqslant 2$ then there is a weak equivalence of $\mathrm{dg\Lambda Hopf}e_n^c$-comodules
	\[
	\coind_n^m(\Graphs^{Z_{u}}_{U,m}) \simeq \Graphs^{Z_{u}^{\mathrm{t}}}_{U,n}.	
	\]
	Here $Z_u \in \hgc_{U, m}$ and $Z_u^{\mathrm t}$ is the tree part of $Z_u$ regraded to be an element in $\hgc_{U, n}$.
\end{futuretheorem}

The above description allows us to modify the composition map of \hr[Theorem]{thm:main-composition} to the case of 
a "map":
\[
	\map^h_{\mathrm{mod}-\mathcal F_m} (\mathcal F_M, \mathcal F_N^\Q)
	\times
	\map^h_{\mathrm{mod}-\mathcal F_n} (\mathcal F_N, \mathcal F_K^\Q)
	\dashrightarrow
	\map^h_{\mathrm{mod}-\mathcal F_m} (\mathcal F_M, \mathcal F_K^\Q)
\]
that naturally arises from the composition~\eqref{eq:emb-tilda composition}.

The appropriate modification happens in \hr[Section]{sec:general case}. The main result is the following theorem.

\begin{futuretheorem}{main-general}
	Let $\mc(\hgc_{V,W,n}^{Z_v, Z_w}) \to \mc(\hgc_{V,W,k}^{Z^{\mathrm{t}}_v, Z^{\mathrm{t}}_w})$
	be the map induced by the composite
	\[
	\hgc_{U,V,n} \to \hgc^{\mathrm{tree}}_{U,V,n} \to \hgc_{U,V,k}
	\]
	of projection onto the tree part and regrading of the tree.
	Then the composite
	\[
	\mc(\hgc_{V,W,k}^{Z_v, Z_w})\times \mc(\hgc_{U,V,n}^{Z_u, Z_v})
	\to
	\mc(\hgc_{V,W,k}^{Z_v, Z_w})\times \mc(\hgc_{U,V,k}^{Z^{\mathrm{t}}_v, Z^{\mathrm{t}}_w})
	\to
	\mc(\hgc_{U,W,k}^{Z_u,Z_v}),
	\]
	where the second arrow is given by \textup{\hr[Theorem]{thm:main-composition}}, is weakly equivalent to the "composition" map
	\[
	\begin{tikzcd}[column sep=0.5em,font=\small,ampersand replacement=\&]
		\map_n^h(\cores_k^n(\Graphs_{W,k}), \Graphs_{V,n})
		\arrow[r, phantom, "\times"]
		\&
		\map_m^h(\cores_n^m(\Graphs_{V,n}), \Graphs_{U,m})
		\arrow[rr, dashed]
		\&\&
		\map_m^h(\cores_k^m(\Graphs_{W,k}), \Graphs_{U,m})
	\end{tikzcd}
	\]
	that is defined via $\coind-\cores$ adjunction \textup{(\textit{see}~\hr[Section]{sec:general case})}.%
\end{futuretheorem}

\subsection{Conventions} In what follows the base field will always be the field of rational numbers~$\Q$.

Let $V^{\sbullet}$ be a graded vector space, $x_1, \ldots, x_n$ homogeneous elements of degrees $|x_1|, \ldots, |x_n|$, respectively, and $\sigma \in S_n$ a permutation. The Koszul sign $\epsilon(\sigma)$ is defined by the relation
\[
\epsilon(\sigma)x_{\sigma(1)}\cdots x_{\sigma(n)} = x_1 \cdots x_n
\]
in the graded symmetric algebra $S(V)$.

We (ab)use the notation $\map_n$ and $\mor_n$ for mapping spaces and morphism sets in the categories of $\mathrm{dg\Lambda\!Hopf}$ $e_n$-modules and $\mathrm{dg\Lambda\!Hopf}$ $e_n^c$-comodules.

\subsection{Acknowledgements}
I am deeply grateful to my supervisor, Thomas Willwacher, for posing the problem and for his patient guidance over many hours.

\section{$L_\infty$ preliminaries}

\subsection{$L_\infty$-algebras} A (flat) $L_\infty$-algebra (over $\Q$) is a graded vector space $L$ over $\Q$ endowed with a degree one codifferential $l$ on the reduced symmetric coalgebra $\big(S^+\big(L[1]\big), \overline\Delta_{sh}\big)$. 

An $L_\infty$-morphism between two $L_\infty$-algebras $F\colon (L, l) \to (L', l')$ is a morphism of graded codifferential coalgebras
\[
    F\colon S^+\big(L[1]\big) \to S^+\big(L'[1]\big),
\]
i.e. $Fl = l' F$.

Looking at Taylor coefficients of the coderivation $l$ we get the usual structure maps
\[
    l_n\colon S^n\big(L[1]\big) \to L[2],
\]
with the coderivation given by
\[
    l(x_1, \dots, x_n) = \sum_{k=1}^{n} \sum_{\sigma \in Sh(k,n-k)} \epsilon(\sigma) l_k(x_{\sigma(1)}, \dots, x_{\sigma(k)}) x_{\sigma(k+1)} \dots x_{\sigma(n)}
\]
for homogeneous $x_1, \ldots, x_n \in L$. The relation $l^2 = 0$ is equivalent to
\[
    \sum_{k=1}^{n} \sum_{\sigma \in Sh(k,n-k)} \epsilon(\sigma) l_{n-k+1} ( l_k ( x_{\sigma(1)}, \dots, x_{\sigma(k)} ), x_{\sigma(k+1)}, \dots, x_{\sigma(n)} ) = 0
\]

\subsection{$L_\infty$-derivations and actions}
Define a dg Lie algebra $\Der_{L_\infty} (L)$ of $L_\infty$-derivations of $L$ to be coderivations of the cofree cocommutative coalgebra $(S^+\big(L[1]\big), \overline\Delta_{sh})$:
\[
	\Der_{L_\infty} (L) \coloneqq \big\{ \delta \colon S^+\big(L[1]\big) \to S^+\big(L[1]\big)\,|\; \bar\Delta_{sh}\delta = (\delta \otimes \id + \id \otimes \delta) \bar\Delta_{sh}\big\},
\]
with the bracket
\[
    [\delta_1, \delta_2] \coloneqq \delta_1 \circ \delta_2 - (-1)^{|\delta_1| |\delta_2|} \delta_2 \circ \delta_1,
\]
where the degree $|\delta_i|$ is the degree of the map $\delta_i \colon S^+\big(L[1]\big) \to S^+\big(L[1]\big)$,
and the differential 
\[
    d\cdot \delta \coloneqq [l, \delta].
\]

Note that the Maurer-Cartan set
\[
    \MC\big(\Der_{L_\infty} (L)\big) \coloneqq \{\delta \in \Der_{L_\infty} (L)\,|\; d\cdot\delta + \textstyle{\frac 12}[\delta, \delta] = 0\}
\]
is exactly the set of deformations of the $L_\infty$-structure on $L$. Namely, $l + \delta$ defines an $L_\infty$-structure on $L$ for $\delta \in \Der_{L_\infty} (L)$ if and only if $\delta$ is a Maurer-Cartan element.

Let $\mathfrak g$ be a dg Lie algebra. A \emph{left} (resp. \emph{right}) $L_\infty$-\emph{action} of $\mathfrak g$ on an $L_\infty$-algebra $L$ is a dg Lie algebra homomorphism $\mathfrak g \to \Der_{L_\infty} (L)$ (resp. $\mathfrak g^{op} \to \Der_{L_\infty} (L)$).

Note that an(y) $L_\infty$-action of $\mathfrak g$ on $L$ induces a map of Maurer-Cartan sets
\[
    \MC(\mathfrak g) \to \MC\big(\Der_{L_\infty} (L)\big).
\]
Therefore, any Maurer-Cartan element in $\mathfrak g$ can be used to deform the $L_\infty$-structure on $L$. Namely, let the $L_\infty$-action of $\mathfrak g$ on $L$ be given by a morphism $\varphi\colon \mathfrak{g} \to \Der_{L_\infty} (L)$, and let $z \in \MC(\mathfrak{g})$ be a Maurer-Cartan element. We then define $L^z$ to be the $L_\infty$-algebra with the same underlying space and the twisted coderivation $l+\varphi(z)$.

Similarly, if we are given a pair (left and right) of actions $\varphi_1\colon \mathfrak g_1 \to \Der_{L_\infty} (L)$ and $\varphi_2\colon \mathfrak g_2^{op} \to \Der_{L_\infty} (L)$ and a couple of Maurer-Cartan elements $z_1 \in \MC(\mathfrak g_1)$ and $z_2 \in \MC(\mathfrak g_2)$, we can define $L^{z_1, z_2}$ to be the deformation of $L_\infty$-algebra structure with the coderivation given by $l + \varphi_1(z_1) + \varphi_2(z_2)$.

\vspace{3mm}
\begin{proposition}\label{prop:diagonal twist}
Let $\mathfrak g$ be a dg Lie algebra, and $L_1$, $L_2$, $L_3$ three $L_\infty$-algebras. Suppose that $L_1$, $L_2$ are endowed with right and left $L_\infty$-actions $\varphi_1\colon \mathfrak g^{op} \to \Der_{L_\infty} (L_1)$, $\varphi_2\colon \mathfrak g \to \Der_{L_\infty} (L_2)$ of $\mathfrak g$ respectively. Let
\[
    F\colon L_1\oplus L_2 \to L_3
\]
be a morphism of $L_\infty$-algebras that is invariant under the diagonal action of $\mathfrak g$, i.e. for $z \in \mathfrak g$, $x_1, \ldots, x_p \in L_1$, $y_1, \ldots, y_q \in L_2$ the equality holds
\begin{equation}\label{l-infty-invariance}
    F\big(\varphi_1(z)(x_1\cdots x_py_1\cdots y_q)\big)
    =
    F\big(\varphi_2(z)(x_1\cdots x_py_1\cdots y_q)\big).
\end{equation}
In the above, $\varphi_1$, $\varphi_2$ denote the actions on $L_1$ and $L_2$ respectively. Then for any $z \in \MC(\mathfrak g)$ there is an induced morphism
\[
    L_1^z \oplus L_2^z \to L_3
\]
from the direct sum of twisted $L_\infty$-algebras to $L_3$.
\end{proposition}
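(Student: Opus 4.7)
The plan is to work entirely at the level of cofree cocommutative coalgebras. I would first lift $F$ to its associated codifferential-coalgebra morphism $\widehat F\colon S^+\bigl((L_1\oplus L_2)[1]\bigr)\to S^+\bigl(L_3[1]\bigr)$ and write $d_{12}$ and $d_3$ for the codifferentials on source and target. The condition that $F$ is an $L_\infty$-morphism becomes the single equation $\widehat F\, d_{12}=d_3\,\widehat F$, and producing the sought morphism $L_1^z\oplus L_2^z\to L_3$ is then tantamount to showing that the \emph{same} map $\widehat F$ continues to intertwine $d_3$ with the twisted codifferential $d_{12}^{\,z}$ associated to the direct-sum $L_\infty$-structure on $L_1^z\oplus L_2^z$.

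The first step is to write $d_{12}^{\,z}$ explicitly. Using the coalgebra decomposition $S^+\bigl((L_1\oplus L_2)[1]\bigr)\cong S(L_1[1])\otimes S(L_2[1])$ (appropriately reduced), each $\varphi_i(z)\in\Der_{L_\infty}(L_i)$ extends canonically to a coderivation $\widehat{\varphi_i(z)}$ on the full coalgebra acting only on the $L_i$-factors. Because the right action of $\mathfrak g$ on $L_1$ produces a sign upon its translation into a twist element of $\Der_{L_\infty}(L_1)$, one gets
\[
    d_{12}^{\,z} \;=\; d_{12} \;-\; \widehat{\varphi_1(z)}\;+\;\widehat{\varphi_2(z)}.
\]
The identity $(d_{12}^{\,z})^2=0$ will then follow from the Maurer--Cartan property of $z$ (together with the fact that each $\varphi_i$ is a dg Lie algebra homomorphism) and from the commutation $[\widehat{\varphi_1(z)},\widehat{\varphi_2(z)}]=0$, which is immediate since the two coderivations are supported on disjoint tensor factors.

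Combining this with $\widehat F\, d_{12} = d_3\,\widehat F$, the problem reduces to the single identity
\[
    \widehat F\bigl(-\widehat{\varphi_1(z)}+\widehat{\varphi_2(z)}\bigr)\;=\;0.
\]
Evaluated on a monomial $x_1\cdots x_p\, y_1\cdots y_q$ with $x_i\in L_1[1]$ and $y_j\in L_2[1]$, this is exactly the diagonal-invariance hypothesis~\eqref{l-infty-invariance}, so once the equality is established on arbitrary such monomials it propagates by the coalgebra structure. The main step that will require genuine care—indeed, essentially the only nontrivial point in the argument—is matching the Koszul and right-vs-left action signs so that this final evaluation reads precisely as the stated invariance; no further homological input is needed, and once this bookkeeping is settled the conclusion is immediate.
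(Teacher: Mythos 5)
Your proposal is correct and follows essentially the same route as the paper: both reduce the claim to the observation that the twisted codifferential differs from the untwisted one by $\pm\bigl(\varphi_1(z)-\varphi_2(z)\bigr)$, which is annihilated by $F$ precisely because of the diagonal-invariance hypothesis~\eqref{l-infty-invariance}. The overall sign in front of $\varphi_1(z)-\varphi_2(z)$ (you write $-\widehat{\varphi_1(z)}+\widehat{\varphi_2(z)}$, the paper writes $+\varphi_1(z)-\varphi_2(z)$) is immaterial here, since the invariance kills the difference either way.
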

\begin{proof}
Denote by $l^\oplus$, $l^z$ the $L_\infty$-structures on the direct sum $L_1 \oplus L_2$ and on the twisted direct sum $L_1^z\oplus L_2^z$. Then we have
\[
    l^z(x_1\cdots x_p y_1 \cdots y_q) = l^\oplus(x_1\cdots x_p y_1 \cdots y_q) + \varphi_1(z)(x_1\cdots x_p y_1 \cdots y_q) - \varphi_2(z)(x_1\cdots x_p y_1 \cdots y_q).
\]
Therefore, applying $F$ to both sides and using the invariance~\eqref{l-infty-invariance} we get
\[
    F\big(l^z(x_1\cdots x_p y_1 \cdots y_q)\big) = F\big(l^\oplus(x_1\cdots x_p y_1 \cdots y_q)\big).
\]
Thus, $F$ is well-defined on the twisted direct sum.
\end{proof}

\begin{corollary}\label{l-infty-twist-composition}
Let $\mathfrak g_1$, $\mathfrak g_2$, $\mathfrak g_3$ be three dg Lie algebras, and $L_1$, $L_2$, $L_3$ three $L_\infty$-algebras. Suppose that
\begin{itemize}[leftmargin=0.03\textwidth]
\item $L_1$, $L_3$ have a left $L_\infty$-action of $\mathfrak g_1$;
\item $L_1$, $L_2$ have respectively right and left $L_\infty$-actions of $\mathfrak g_2$;
\item $L_2$, $L_3$ have a right $L_\infty$-action of $\mathfrak g_3$;
\end{itemize}
such that the given left and right actions on $L_i$, $i = 1,2,3$, commute.
Let $F\colon L_1\oplus L_2 \to L_3$
be a morphism of $L_\infty$-algebras that is $\mathfrak g_1$-$\mathfrak g_3$-equivariant and invariant under the diagonal action of $\mathfrak g_2$ \textup(see~\eqref{l-infty-invariance}\textup). Then for any triple $z_i \in \MC(\mathfrak g_i)$, $i = 1, 2, 3$ of Maurer-Cartan elements there is an induced morphism
\[
    L_1^{z_1, z_2} \oplus L_2^{z_2, z_3} \to L_3^{z_1, z_3}
\]
of twisted $L_\infty$-algebras, and, therefore, a morphism of Maurer-Cartan sets
\[
    \MC(L_1^{z_1, z_2}) \times \MC(L_2^{z_2, z_3}) \to \MC(L_3^{z_1, z_3}).
\]
\end{corollary}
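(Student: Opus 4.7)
The plan is to reduce the statement to \hr[Proposition]{prop:diagonal twist} by performing the three twists one at a time. First, I would apply \hr[Proposition]{prop:diagonal twist} with $\mathfrak g = \mathfrak g_2$ and $z = z_2$ to obtain an $L_\infty$-morphism
\[
    L_1^{0, z_2} \oplus L_2^{z_2, 0} \to L_3.
\]
The $\mathfrak g_1$-action on $L_1$, extended by zero to $L_2$, induces a left $L_\infty$-action of $\mathfrak g_1$ on $L_1^{0, z_2} \oplus L_2^{z_2, 0}$; this uses the commutativity hypothesis between the $\mathfrak g_1$- and $\mathfrak g_2$-actions on $L_1$, which ensures that $\varphi_1^{(1)}$ still lands in the derivations of the twisted codifferential. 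Since $F$ is $\mathfrak g_1$-equivariant, the above morphism remains equivariant, and twisting both source and target by $z_1 \in \MC(\mathfrak g_1)$ yields an $L_\infty$-morphism
\[
    L_1^{z_1, z_2} \oplus L_2^{z_2, 0} \to L_3^{z_1, 0}.
\]
A third, wholly analogous twist by $z_3 \in \MC(\mathfrak g_3)$ produces the desired morphism $L_1^{z_1, z_2} \oplus L_2^{z_2, z_3} \to L_3^{z_1, z_3}$.

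Alternatively, and perhaps more directly, one can verify the statement in a single step by mimicking the computation in the proof of \hr[Proposition]{prop:diagonal twist}. Write $\varphi_i^{(j)}$ for the action of $\mathfrak g_i$ on $L_j$, extended as a coderivation. The twisted codifferential on $L_1^{z_1,z_2} \oplus L_2^{z_2,z_3}$ expands as
\[
    l^\oplus + \varphi_1^{(1)}(z_1) + \varphi_2^{(1)}(z_2) - \varphi_2^{(2)}(z_2) + \varphi_3^{(2)}(z_3)
\]
with the sign conventions of \hr[Proposition]{prop:diagonal twist}, while the codifferential on $L_3^{z_1, z_3}$ is $l_3 + \varphi_1^{(3)}(z_1) + \varphi_3^{(3)}(z_3)$. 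Applying $F$ and comparing the two $L_\infty$-structures, the $\mathfrak g_1$- and $\mathfrak g_3$-contributions match by $\mathfrak g_1$- and $\mathfrak g_3$-equivariance, while the two $\mathfrak g_2$-terms cancel under $F$ by diagonal invariance~\eqref{l-infty-invariance}, exactly as in \hr[Proposition]{prop:diagonal twist}. The claimed map of Maurer-Cartan sets is then obtained by functoriality of $\MC$ applied to the resulting $L_\infty$-morphism.

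The step I expect to require the most care is verifying, at each stage of the iterated twisting, that the remaining actions still define dg Lie algebra homomorphisms into the $L_\infty$-derivations of the newly twisted algebra. Concretely, one must check that if $\psi$ is an $L_\infty$-derivation that commutes with every derivation in the image of another action $\varphi$, then $\psi$ remains a derivation after twisting by any $\varphi(z)$ with $z \in \MC$. This is elementary but is the precise place where the hypothesis that the left and right actions on each $L_i$ commute enters; once it is in place, the bookkeeping of signs and the passage to Maurer-Cartan sets is routine.
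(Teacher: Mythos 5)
Your proposal is correct and follows essentially the same route as the paper: first apply \hr[Proposition]{prop:diagonal twist} with $\mathfrak g_2$ and $z_2$ to get a morphism out of $L_1^{z_2}\oplus L_2^{z_2}$, then use the commutativity of the left and right actions together with $\mathfrak g_1$-$\mathfrak g_3$-equivariance of $F$ to further twist by $z_1$ and $z_3$. Your additional remarks (the direct one-step verification and the check that twisting preserves the remaining derivation actions) only flesh out details the paper leaves implicit.
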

\begin{proof}
	By \hr[Proposition]{prop:diagonal twist}, we get a morphism of $L_\infty$-algebras
	\[
		L_1^{z_2}\oplus L_2^{z_2} \to L_3.
	\]
	As the left and right actions commute, we can furthermore twist the given morphism with $z_1$ and $z_3$ to get the desired morphism
	\[
		L_1^{z_1, z_2} \oplus L_2^{z_2, z_3} \to L_3^{z_1, z_3}
	\]
	of twisted $L_\infty$-algebras.
\end{proof}

\section{Models for composition}

\begin{proposition}\label{sset-composition-model}
	Let $\mathcal C$ be a simplicial model category, and $A, B, C \in \mathcal C$ be cofibrant objects. Let $X_{\sbullet}, Y_{\sbullet}, Z_{\sbullet}$ be simplicial sets equipped with maps $X_{\sbullet} \to \map_{\mathcal C} (A, B)$, $Y_{\sbullet} \to \map_{\mathcal C} (B, C)$, $Z_{\sbullet} \to \map_{\mathcal C} (A, C)$ and $X_{\sbullet} \times Y_{\sbullet} \to Z_{\sbullet}$ making the following diagram commute
	\[
	\begin{tikzcd}
		X_{\sbullet} \arrow[r, symbol=\times] \arrow[d] & Y_{\sbullet} \arrow[r] \arrow[d]  & Z_{\sbullet} \arrow[d]
		\\
		\map_{\mathcal C} (A, B) \arrow[r, symbol=\times] & \map_{\mathcal C} (B, C) \arrow[r] & \map_{\mathcal C} (A, C).
	\end{tikzcd}
	\]
	Let $A \xrightarrow{q_A} \widehat{A}$, $B \xrightarrow{q_B} \widehat{B}$ and $C \xrightarrow{q_C} \widehat{C}$ be fibrant-cofibrant replacements that are furthermore cofibrations. Finally, assume that the composition maps
	\begin{alignat}{3}\label{eq1}
		X_{\sbullet} \to &\map_{\mathcal C} (A, B) \xrightarrow{(q_A)_*} &\map_{\mathcal C} (A, \widehat B) = &\map^h_{\mathcal C} (A, B)%
		\notag
		\\
		Y_{\sbullet} \to &\map_{\mathcal C} (B, C) \xrightarrow{(q_B)_*} &\map_{\mathcal C} (B, \widehat C) = &\map^h_{\mathcal C} (B, C)
		\\
		Z_{\sbullet} \to &\map_{\mathcal C} (A, C) \xrightarrow{(q_C)_*} &\map_{\mathcal C} (A, \widehat C) = &\map^h_{\mathcal C} (A, C)%
		\notag
	\end{alignat}
	are weak equivalences of simplicial sets.
	Then the map $X_{\sbullet} \times Y_{\sbullet} \to Z_{\sbullet}$ is weakly equivalent to the derived composition map $\map^h_{\mathcal C} (A, B)\times \map^h_{\mathcal C} (B, C) \to \map^h_{\mathcal C} (A, C)$.
\end{proposition}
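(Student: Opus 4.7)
The plan is to produce a strict model for the derived composition map and then verify that the given commutative square fits into it on the nose. Since $A, B, C$ are cofibrant and $\widehat{A}, \widehat{B}, \widehat{C}$ are fibrant-cofibrant, one has canonical weak equivalences $\map_{\mathcal C}(A, B) \xrightarrow{(q_B)_*} \map_{\mathcal C}(A, \widehat{B}) = \map^h_{\mathcal C}(A,B)$ and similarly for the other two pairs. Under these identifications, condition~\eqref{eq1} says that the composites $X_{\sbullet} \to \map^h_{\mathcal C}(A, B)$, $Y_{\sbullet} \to \map^h_{\mathcal C}(B, C)$, $Z_{\sbullet} \to \map^h_{\mathcal C}(A, C)$ are weak equivalences of simplicial sets.

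To get a strict model for the derived composition, I would note that $q_B\colon B \to \widehat{B}$ is a trivial cofibration (a cofibration by hypothesis and a weak equivalence as a replacement) and $\widehat{C}$ is fibrant, so SM7 implies
\[
(q_B)^*\colon \map_{\mathcal C}(\widehat{B}, \widehat{C}) \to \map_{\mathcal C}(B, \widehat{C})
\]
is a trivial fibration of Kan simplicial sets. Pick a section $s$ (which exists since every simplicial set is cofibrant) and model the derived composition by
\[
\map_{\mathcal C}(A, \widehat{B}) \times \map_{\mathcal C}(B, \widehat{C}) \xrightarrow{\id \times s} \map_{\mathcal C}(A, \widehat{B}) \times \map_{\mathcal C}(\widehat{B}, \widehat{C}) \xrightarrow{\;\circ\;} \map_{\mathcal C}(A, \widehat{C}).
\]
The space of sections $s$ is contractible, so this models the derived composition unambiguously up to homotopy.

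The crux of the argument is to check that the square
\[
\begin{tikzcd}
X_{\sbullet} \times Y_{\sbullet} \arrow[r] \arrow[d] & Z_{\sbullet} \arrow[d]
\\
\map_{\mathcal C}(A, \widehat{B}) \times \map_{\mathcal C}(B, \widehat{C}) \arrow[r] & \map_{\mathcal C}(A, \widehat{C})
\end{tikzcd}
\]
(verticals from~\eqref{eq1}, bottom horizontal the composite just constructed) commutes strictly. Chasing $(x, y) \in X_{\sbullet} \times Y_{\sbullet}$ with images $f\colon A \to B$, $g\colon B \to C$: by the commutativity of the original diagram, the image of $(x,y)$ in $Z_{\sbullet}$ maps to $gf$ in $\map_{\mathcal C}(A, C)$, and hence to $q_C \circ g \circ f$ in $\map_{\mathcal C}(A, \widehat{C})$. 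The other route yields $s(q_C g) \circ q_B \circ f$, which equals $q_C \circ g \circ f$ by the section identity $s(h) \circ q_B = h$. The two routes agree.

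Since the verticals are weak equivalences, this strict commutative square exhibits $X_{\sbullet} \times Y_{\sbullet} \to Z_{\sbullet}$ as weakly equivalent to the derived composition map $\map^h_{\mathcal C}(A,B) \times \map^h_{\mathcal C}(B,C) \to \map^h_{\mathcal C}(A,C)$. The only nontrivial ingredient is SM7 to obtain the section; everything else is a direct diagram chase, and I do not anticipate any serious obstacle.
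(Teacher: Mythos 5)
Your proof is correct, and it takes a genuinely different route from the paper's. Both arguments hinge on the same key fact — by SM7, $(q_B)^*\colon \map_{\mathcal C}(\widehat B,\widehat C)\to\map_{\mathcal C}(B,\widehat C)$ is a trivial fibration, and the only real issue is inverting it compatibly with the composition — but you resolve this by choosing an actual section $s$ (using that every simplicial set is cofibrant) and then verifying by a direct element chase that the resulting square commutes \emph{strictly}; the chase is valid simplicially in every degree since the section identity $s(h)\circ q_B=h$ and associativity of the enriched composition hold at the level of $n$-simplices. The paper instead avoids any choice of section: it forms the pullbacks $F_\sbullet$, $G_\sbullet$, $H_\sbullet$ of the cospans relating $X_\sbullet$, $Y_\sbullet$, $Z_\sbullet$ to the top row $\map_{\mathcal C}(A,\widehat B)\times\map_{\mathcal C}(\widehat B,\widehat C)\to\map_{\mathcal C}(A,\widehat C)$, and connects the two composition squares by a zig-zag through $F_\sbullet\times G_\sbullet\to H_\sbullet$, with the legs being weak equivalences as pullbacks of weak equivalences along (trivial) fibrations. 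Your approach buys a shorter, more explicit comparison — a single strictly commutative square rather than a pullback zig-zag — at the cost of a choice of section and of relying on the cofibrancy of all simplicial sets (you correctly note the space of such sections is contractible, so no ambiguity is introduced); the paper's approach is choice-free and would adapt to enrichments where not every object of the enriching category is cofibrant. Either way the conclusion and the essential input (SM7 plus the hypothesized weak equivalences~\eqref{eq1}) are the same.
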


\begin{proof}
	Consider the following diagram
	\begin{equation}\label{d1}
		\begin{tikzcd}
			\map_{\mathcal{C}}(A,\widehat B)
			\arrow[r, symbol=\times]
			\arrow[dr, swap, "\id", "\sim"']
			&
			\map_{\mathcal{C}}(\widehat B,\widehat C)
			\arrow[r]
			\arrow[dr, swap, "(q_B)^*", "\sim"']
			&
			\map_{\mathcal{C}}(A,\widehat C)
			\arrow[dr, swap, "\id", "\sim"']
			\\
			&
			\map_{\mathcal{C}}(A,\widehat B)
			\arrow[r, symbol=\times]
			&
			\map_{\mathcal{C}}(B,\widehat C)
			&
			\map_{\mathcal{C}}(A,\widehat C)
			\\
			X_{\sbullet}
			\arrow[r, symbol=\times]
			\arrow[ur, "\sim"]
			&
			Y_{\sbullet}
			\arrow[r]
			\arrow[ur, "\sim"]
			&
			Z_{\sbullet}
			\arrow[ur, "\sim"]
		\end{tikzcd}
	\end{equation}
	where the arrows bottom-to-top are given by the compositions~\eqref{eq1}.
	The top horizontal arrow is given by the composition map and represents the derived composition map. Note that all diagonal arrows are weak equivalences.
	
	Let $F_{\sbullet}, G_{\sbullet}, H_{\sbullet}$ be the pullbacks of the following diagrams
	\begin{center}
		\begin{tikzcd}[row sep=.2em]
			X_{\sbullet}
			\arrow[r, "\sim"]
			&
			\map_{\mathcal{C}}(A,\widehat B)
			&
			\map_{\mathcal{C}}(\widehat B,\widehat C);
			\arrow[l, "\id"']
			\\
			Y_{\sbullet}
			\arrow[r, "\sim"]
			&
			\map_{\mathcal{C}}(B,\widehat C)
			&
			\map_{\mathcal{C}}(\widehat B,\widehat C);
			\arrow[l, "(q_B)^*"']
			\\
			Z_{\sbullet}
			\arrow[r, "\sim"]
			&
			\map_{\mathcal{C}}(A,\widehat C)
			&
			\map_{\mathcal{C}}(A,\widehat C).
			\arrow[l, "\id"']
		\end{tikzcd}
	\end{center}
	By the universal property, the composites
	\[
	F_{\sbullet}\times G_{\sbullet} \to \map_{\mathcal{C}}(A,\widehat B) \times \map_{\mathcal{C}}(\widehat B, \widehat C) \to \map_{\mathcal{C}}(A,\widehat C)
	\]
	and
	\[
	F_{\sbullet}\times G_{\sbullet} \to X_{\sbullet}\times Y_{\sbullet} \to Z_{\sbullet}
	\]
	define a map
	\[
	F_{\sbullet}\times G_{\sbullet} \to H_{\sbullet}
	\]
	that fits into the following commutative diagram
	\[
	\begin{tikzcd}
		&\map_{\mathcal{C}}(A,\widehat B)
		\arrow[r, symbol=\times]
		&
		\map_{\mathcal{C}}(\widehat B,\widehat C)
		\arrow[r]
		&
		\map_{\mathcal{C}}(A,\widehat C)
		\\
		F_{\sbullet}
		\arrow[r, symbol=\times]
		\arrow[ur]
		\arrow[dr]
		&
		G_{\sbullet}
		\arrow[r]
		\arrow[ur]
		\arrow[dr]
		&
		H_{\sbullet}
		\arrow[ur]
		\arrow[dr]
		\\
		&
		X_{\sbullet}
		\arrow[r, symbol=\times]
		&
		Y_{\sbullet}
		\arrow[r]
		&
		Z_{\sbullet}
	\end{tikzcd}
	\]
	The diagonal arrows are weak equivalences as pullbacks of diagonal arrows of~\eqref{d1}
\end{proof}

\section{Graphical categories}

\subsection{Graph complexes} In this section we give an overview of graph and hairy graph complexes. We refer to \cite{will23} for the comprehensive exposition. In what is coming we will use both normal and dualized versions of the graph complex. 

We start with the dg$\Lambda$Hopf $e_n^c$-comodule $\Graphs_{U,n}^\#$, $n \geqslant 2$. Let $U$ be a graded vector space. Define $\Graphs_{U,n}^\#(r)$ to be a vector space spanned by isomorphism classes of oriented graphs with $r$ numbered "\emph{external}" vertices and an arbitrary finite number of unnumbered "\emph{internal}" vertices (black in the pictures), with no connected components without an external vertex. Moreover, all vertices can be decorated by elements of the symmetric algebra $S(U)$.
\[
\begin{tikzpicture}
	\node[ext] (v1) at (0,0) {$\scriptstyle 1$};
	\node[ext] (v2) at (.7,0) {$\scriptstyle 2$};
	\node[ext,label=0:{$\gamma$}] (v3) at (1.4,0) {$\scriptstyle 3$};
	\node[int,label=90:{$\alpha\beta$}] (i1) at (.35,.7) {};
	\node[int] (i2) at (1.05,.7) {};
	\draw (v1) edge (i1) 
	(i1) edge (i2) edge (v2) 
	(i2) edge (v2) edge (v3)
	(v2) edge[bend left] (v3);
\end{tikzpicture}
\in \Graphs_{U,n}^\#(3), \text{with $\alpha,\beta,\gamma\in U$.} 
\]
Throughout the paper we use cohomological degree conventions. Namely, the internal vertices have degree $-n$, the edges have degree $n-1$:
\[
	\deg(\Gamma) = (n-1)\#\{\text{edges of $\Gamma$}\} - n\#\{\text{internal vertices of $\Gamma$}\}.
\]
The orientation converts into the following conventions for the graphs depending on the parity of $n$:
\begin{itemize}
\item[even $n$:] Graphs have an order on the set of edges. Changing the order by a permutation $\sigma$ alters the sign of the graph by $\sgn(\sigma)$.

\item[odd $n$:] Graphs have an order on the set of internal vertices. Changing the order by a permutation $\sigma$ changes the sign of the graph by $\sgn(\sigma)$. Moreover, the edges have orientations and the different orientations are identified up to sign.
\[
	\longrightarrow {}={} - \longleftarrow.
\]
\end{itemize}
The multiplication on $\Graphs_{U,n}^\#(r)$ is given by gluing graphs along external vertices.
\[
\begin{tikzpicture}
	\node[ext] (v1) at (0,0) {$\scriptstyle 1$};
	\node[ext] (v2) at (.7,0) {$\scriptstyle 2$};
	\node[ext] (v3) at (1.4,0) {$\scriptstyle 3$};
	\node[int,label=90:{$\alpha\beta$}] (i1) at (.35,.7) {};
	\node[int] (i2) at (1.05,.7) {};
	\draw (v1) edge (i1) 
	(i1) edge (i2) edge (v2) 
	(i2) edge (v2) edge (v3);
\end{tikzpicture}
\wedge
\begin{tikzpicture}
	\node[ext] (v1) at (0,0) {$\scriptstyle 1$};
	\node[ext] (v2) at (.7,0) {$\scriptstyle 2$};
	\node[ext,label=0:{$\gamma$}] (v3) at (1.4,0) {$\scriptstyle 3$};
	\draw (v2) edge[bend left] (v3);
\end{tikzpicture}
=
\begin{tikzpicture}
	\node[ext] (v1) at (0,0) {$\scriptstyle 1$};
	\node[ext] (v2) at (.7,0) {$\scriptstyle 2$};
	\node[ext,label=0:{$\gamma$}] (v3) at (1.4,0) {$\scriptstyle 3$};
	\node[int,label=90:{$\alpha\beta$}] (i1) at (.35,.7) {};
	\node[int] (i2) at (1.05,.7) {};
	\draw (v1) edge (i1) 
	(i1) edge (i2) edge (v2) 
	(i2) edge (v2) edge (v3)
	(v2) edge[bend left] (v3);
\end{tikzpicture}
\]
The differential is given by contraction of the edges between two distinct internal vertices.\footnote{%
If the order of edges and vertices is such that $e$ is the first edge, connecting the first internal vertex in the ordering to the second, then the sign is ”$+$”, with the ordering of the remaining edges and vertices in $\Gamma/e$ being the same as in $\Gamma$.}
The decoration of the resulting vertex is the product of the decorations at the ends of the edge.
At the moment we don't put any extra requirements on the valency, but for the future notice: each vertex decoration by $U$ contributes~$1$ to the valency of the vertex.

The collection $\{\Graphs_{U,n}^\#(r)\}_{r \in \Z_{>0}}$ comes with the structure of a right $e_n^c$-comodule. The right action of the symmetric group $S_r$ on $\Graphs_{U,n}^\#(r)$ is defined by permuting the numbering of the external vertices. The right $e_n^c$-coaction is given by contracting subgraphs with no internal vertices. Namely, let
\[
	\Delta_{s}\colon \Graphs_{U,n}^\#(r) \to \Graphs_{U,n}^\#(r-s+1) \otimes e_n^c(s)
\]
be the cooperadic coactions corresponding to the subset $\{1, \ldots, s\} \subseteq \{1, \ldots, r\}$. Then, for a graph $\Gamma \in \Graphs_{U,n}^\#(r)$,
\[
	\Delta_s(\Gamma) = \sum \pm (\Gamma/\gamma) \otimes \gamma,
\]
where the sum runs over all subgraphs $\gamma \subseteq \Gamma$ that contain the external vertices $1, \ldots, s$, and no other vertices, and $\Gamma/\gamma$ the graph obtained by contracting $\gamma$ to one new external vertex numbered $1$ and renumbering the external vertices $s+1, \ldots, r$ to $2, \ldots, r-s+1$.
\[
\Delta_2\colon
\begin{tikzpicture}
	\node[ext] (v1) at (0,0) {$\scriptstyle 1$};
	\node[ext] (v2) at (.7,0) {$\scriptstyle 2$};
	\node[ext] (v3) at (1.4,0) {$\scriptstyle 3$};
	\node[int] (i1) at (0,.7) {};
	\node[int] (i2) at (.7,.7) {};
	\draw (v1) edge (i1)
		  (v2) edge (i2)
		  (v3) edge (i1) edge (i2)
		  (i1) edge (i2);
\end{tikzpicture}
\mapsto
\begin{tikzpicture}
	\node[ext] (v1) at (0,0) {$\scriptstyle 1$};
	\node[ext] (v2) at (.7,0) {$\scriptstyle 2$};
	\node[int] (i1) at (0,.7) {};
	\node[int] (i2) at (.7,.7) {};
	\draw (v1) edge (i1) edge (i2)
	(v2) edge (i1) edge (i2)
	(i1) edge (i2);
\end{tikzpicture}
\otimes
\begin{tikzpicture}
	\node[ext] (v1) at (0,0) {$\scriptstyle 1$};
	\node[ext] (v2) at (.7,0) {$\scriptstyle 2$};
	\draw (v1) edge[bend left] (v2);
\end{tikzpicture}
+
\begin{tikzpicture}
	\node[ext] (v1) at (0,0) {$\scriptstyle 1$};
	\node[ext] (v2) at (.7,0) {$\scriptstyle 2$};
	\node[int] (i1) at (0,.7) {};
	\node[int] (i2) at (.7,.7) {};
	\draw (v1) edge (i1) edge (i2)
	(v2) edge (i1) edge (i2)
	(i1) edge (i2);
	\draw (v1) edge[in=110,out=160,loop] (v1);
\end{tikzpicture}
\otimes
\begin{tikzpicture}
	\node[ext] (v1) at (0,0) {$\scriptstyle 1$};
	\node[ext] (v2) at (.7,0) {$\scriptstyle 2$};
\end{tikzpicture}
\]

\subsection{Hairy graph complexes}
Now we pass to a dg Lie algebra $\hgc^\#_{U, n}$. As a vector space $\hgc^\#_{U,n}$ is spanned by isomorphism classes of connected graphs (as before) with all external vertices ("\emph{hairs}") having valency exactly~$1$ and a decoration by $U_1 \coloneqq \Q\cdot 1 \oplus U$, where $1$ is a formal symbol of degree $0$:
\[
\begin{tikzpicture}[yshift=-.5cm]
\node[int] (i1) at (.5,1) {};
\node[int,label=90:{$\alpha$}] (i2) at (1,1.5) {};
\node[int,label=0:{$\beta$}] (i3) at (1.5,1) {};
\node[ext,label=-90:{$a$}] (e1) at (0,.3) {};
\node[ext,label=-90:{$b$}] (e2) at (.5,.3) {};
\node[ext,label=-90:{$c$}] (e3) at (1,.3) {};
\node[ext,label=-90:{$d$}] (e4) at (1.5,.3) {};
\draw (i1) edge (i2) edge (i3) edge (e1) edge (e2)
(i2) edge (i3) edge (e3)
(i3) edge (e4);
\end{tikzpicture},
\quad
\text{with $\alpha, \beta \in U^*$ and $a,b,c,d \in U_1$.}
\]
The differential is given by the operation that is dual to the one described above. Namely, the differential is given by splitting a vertex in all possible ways.

\begin{align*}
	d_{split}
	\begin{tikzpicture}[baseline=-.65ex]
		\node[int] (v) at (0,0) {};
		\node[int](w) at (0,.5) {};
		\draw (v) edge +(-.5,.5) edge +(.5,.5) edge (w) (w) edge +(-.2,.5) edge +(.2,.5);
	\end{tikzpicture}
	&=
	\begin{tikzpicture}[baseline=-.65ex]
		\node[int] (v) {};
		\draw (v) edge +(-.5,.5) edge +(-.2,.5) edge +(.2,.5) edge +(.5,.5);
	\end{tikzpicture}
\end{align*}

The Lie bracket is given by a commutator $[\Gamma_1,\Gamma_2]=\Gamma_1*\Gamma_2- (-1)^{|\Gamma_1||\Gamma_2|}\Gamma_2*\Gamma_1$, where
the Lie-admissible product $\Gamma_1 * \Gamma_2$ of two graphs is obtained by summing over all partial matchings of hairs in $\Gamma_1$ to decorations in $\Gamma_2$, gluing the hair to the decorated vertex and multiplying by the natural pairing of the decorations. The decoration $1$ at hairs is treated specially: these hairs of $\Gamma_1$ can be connected to any vertex of $\Gamma_2$:
\[
\begin{tikzpicture}
	\node[ext] (v) at (0,.3) {$\scriptstyle \Gamma_1$};
	\draw (v) edge +(-.6,-.5) edge +(-.2,-.5) edge +(.2,-.5) edge +(.6,-.5) ;
\end{tikzpicture}
*
\begin{tikzpicture}
	\node[ext] (v) at (0,.3) {$\scriptstyle \Gamma_2$};
	\draw (v) edge +(-.6,-.5) edge +(-.2,-.5) edge +(.2,-.5) edge +(.6,-.5) ;
\end{tikzpicture}
=
\sum 
\begin{tikzpicture}
	\node[ext] (v1) at (0,.3) {$\scriptstyle \Gamma_2$};
	\node[ext] (v2) at (0,1) {$\scriptstyle \Gamma_1$};
	\draw (v2) edge[bend left] (v1) edge[bend right] (v1) edge +(-.6,-.5)  edge +(.6,-.5) 
	(v1) edge +(-.6,-.5) edge +(-.2,-.5) edge +(.2,-.5) edge +(.6,-.5) ;
\end{tikzpicture}.
\]

Recall that $\Graphs^{\#\,*}_{U,n}$ admits an action of $\hgc^\#_{U, n}$ (see~\cite[Section 8.3]{will23}). The action is of $\Gamma_u \in \hgc^\#_{U, n}$ on $\gamma_u \in \Graphs^{\#\,*}_{U,n}$ is given by:
\[
	\Gamma_u \cdot \gamma_u \coloneqq \sum\pm
	\begin{tikzpicture}[baseline=-.8ex]
		\node[draw,circle] (v0) at (0,1.3) {$\Gamma_u$};
		\node[draw,circle] (v) at (0,0) {$\gamma_u$};
		\node[ext] (w1) at (-.7,-1) {};
		\node[ext] (w2) at (-.35,-1) {};
		\node[label=center:{$\scriptstyle\cdots$}] at (0,-1) {};
		\node[ext] (w4) at (.35,-1) {};
		\node[ext] (w5) at (.7,-1) {};
		\draw (v0) edge[bend right] (v) edge[bend right=60] (v) edge[bend left] (v) edge[bend left=60] (v) edge[bend left=47] (w5);
		\draw (v) edge[bend right] (w1) edge (w2) edge[bend left] (w2) edge (w4) edge (w5) edge[bend left] (w5);
	\end{tikzpicture},
\]
where the sum runs over all possible ways to attach hairs of $\Gamma_u$ to $\gamma_u$.

Finally, we define an $L_\infty$-algebra $\hgc^\#_{U, V, n}$. As a vector space $\hgc^\#_{U, V, n}$ is spanned by isomorphism classes of connected graphs similar to $\hgc^\#_{U, n}$ with the difference that now decorations of the internal vertices are given by a graded vector space $V^*$. At the moment, we endow $\hgc^\#_{U,V,n}$ with an abelian $L_\infty$-structure.

\subsection{Graphical actions}\label{graph-action}
In this section we endow $\hgc^\#_{U, V, n}$ with a left $L_\infty$-action of $\hgc^\#_{V, n}$ (resp. right action of $\hgc^\#_{U, n}$). 

As any coderivation is uniquely defined by its Taylor coefficients, the homomorphism
\begin{equation}\label{left-action}
    \hgc^\#_{V,n} \to \Der_{L_\infty} (\hgc^\#_{U, V, n})
\end{equation}
is uniquely defined by the maps 
\[
\begin{tikzpicture}[baseline=-.8ex]
\node[draw,circle] (v) at (0,.3) {$\Gamma_v$};
\node[ext] (w1) at (-.7,-.5) {};
\node[ext] (w2) at (-.35,-.5) {};
\node[label=center:{$\scriptstyle\cdots$}] (w3) at (0,-.5) {};
\node[ext] (w4) at (.35,-.5) {};
\node[ext] (w5) at (.7,-.5) {};
\draw (v) edge[bend right] (w1) edge (w2) edge (w4) edge[bend left] (w5);
\end{tikzpicture}\cdot%
\colon
S^k(\hgc^\#_{U, V, n}[1]) \to \hgc^\#_{U, V, n}[2].
\]
Define the above in the following way
\[
\begin{tikzpicture}[baseline=-.8ex]
\node[draw,circle] (v) at (0,.3) {$\Gamma_v$};
\node[ext] (w1) at (-.7,-.5) {};
\node[ext] (w2) at (-.35,-.5) {};
\node[label=center:{$\scriptstyle\cdots$}] (w3) at (0,-.5) {};
\node[ext] (w4) at (.35,-.5) {};
\node[ext] (w5) at (.7,-.5) {};
\draw (v) edge[bend right] (w1) edge (w2) edge (w4) edge[bend left] (w5);
\end{tikzpicture}\cdot%
\Big(
\begin{tikzpicture}[baseline=-.8ex]
\node[draw,circle] (v) at (0,.3) {$\Gamma_{x_1}$};
\node[ext] (w1) at (-.7,-.5) {};
\node[ext] (w2) at (-.35,-.5) {};
\node[label=center:{$\scriptstyle\cdots$}] (w3) at (0,-.5) {};
\node[ext] (w4) at (.35,-.5) {};
\node[ext] (w5) at (.7,-.5) {};
\draw (v) edge[bend right] (w1) edge (w2) edge (w4) edge[bend left] (w5);
\end{tikzpicture};
\cdots;
\begin{tikzpicture}[baseline=-.8ex]
\node[draw,circle] (v) at (0,.3) {$\Gamma_{x_k}$};
\node[ext] (w1) at (-.7,-.5) {};
\node[ext] (w2) at (-.35,-.5) {};
\node[label=center:{$\scriptstyle\cdots$}] (w3) at (0,-.5) {};
\node[ext] (w4) at (.35,-.5) {};
\node[ext] (w5) at (.7,-.5) {};
\draw (v) edge[bend right] (w1) edge (w2) edge (w4) edge[bend left] (w5);
\end{tikzpicture}
\Big)
\coloneqq
\sum\pm
\begin{tikzpicture}[baseline=-.8ex]
\node[draw,circle] (v) at (0, 1.5) {$\scriptstyle{\Gamma_v}$};
\node[draw,circle] (v1) at (-1.2,.3) {$\scriptstyle{\Gamma_{x_1}}$};
\node[label=center:{$\cdots$}] at (0,.3) {};
\node[draw,circle] (v3) at (1.2,.3) {$\scriptstyle{\Gamma_{x_k}}$};
\node[ext,label=270:{\scalebox{0.5}{$1$}}] (w1) at (2.75,-1) {};
\node[label=center:{$\scriptstyle\cdots$}] (w2) at (3.1,-1) {};
\node[ext,label=270:{\scalebox{0.5}{$1$}}] (w3) at (3.45,-1) {};
\node[ext] (w11) at (-2.05,-1) {};
\node[ext] (w12) at (-1.7,-1) {};
\node[label=center:{$\scriptstyle\cdots$}] at (-1.2,-1) {};
\node[ext] (w13) at (-.8, -1) {};
\node[ext] (w14) at (-.45,-1) {};
\node[ext] (w21) at (0.35,-1) {};
\node[ext] (w22) at (0.7,-1) {};
\node[label=center:{$\scriptstyle\cdots$}] at (1.2,-1) {};
\node[ext] (w23) at (1.6,-1) {};
\node[ext] (w24) at (1.95,-1) {};
\draw (v) edge[bend right] (v1)
edge (v1)
edge[bend left] (v1)
edge[bend right] (v3)
edge (v3)
edge[bend left] (v3)
edge[bend left=50] (w1)
edge[bend left=60] (w3);
\draw (v1)
edge[bend right] (w11)
edge (w12)
edge (w13)
edge[bend left] (w14);
\draw (v3)
edge[bend right] (w21)
edge (w22)
edge (w23)
edge[bend left] (w24);
\end{tikzpicture},
\]
where the sum runs over all ways to attach $\Gamma_v$ to $\Gamma_{x_1}, \ldots, \Gamma_{x_k}$ in such way that
\begin{itemize}[leftmargin=0.04\textwidth]
\item the resulting graph is internally connected;
\item all $V$ decorated hairs of $\Gamma_v$  are attached;
\item hairs of $\Gamma_v$ decorated by~$1$ can remain unattached.
\end{itemize}

To check that~\eqref{left-action} is a homorphism of Lie algebras we need to check the following identity
\begin{align*}
\big[\Gamma_{v_1}, \Gamma_{v_2}\big]\cdot\big(\Gamma_{x_1}; \cdots; \Gamma_{x_k}\big)
=
\sum_{\sigma \in Sh(i, k-i)} \pm\Big(
\Gamma_{v_1}\cdot\big(
\Gamma_{v_2}\cdot(\Gamma_{x_{\sigma(1)}}; \cdots; \Gamma_{x_{\sigma(i)}}); \Gamma_{x_{\sigma(i+1)}}; \cdots; \Gamma_{x_{\sigma(k)}}
\big)
\\
\pm
\Gamma_{v_2}\cdot\big(
\Gamma_{v_1}\cdot(\Gamma_{x_{\sigma(1)}}; \cdots; \Gamma_{x_{\sigma(i)}}); \Gamma_{x_{\sigma(i+1)}}; \cdots; \Gamma_{x_{\sigma(k)}}
\big)\Big).
\end{align*}
To do so, we start with combinatorial description of the right-hand side, namely of the factor
\begin{equation}\label{rhside}
\Gamma_{v_1}\cdot\big(
\Gamma_{v_2}\cdot(\Gamma_{x_{\sigma(1)}}; \cdots; \Gamma_{x_{\sigma(i)}}); \Gamma_{x_{\sigma(i+1)}}; \cdots; \Gamma_{x_{\sigma(k)}}
\big).
\end{equation}

The resulting sum splits into two pieces: graphs with an edge from $\Gamma_{v_1}$ to $\Gamma_{v_2}$ (see~\eqref{rhside-connected}),
\begin{equation}\label{rhside-connected} 
	\begin{tikzpicture}[baseline=-.8ex]
		\node[draw,circle] (v1) at (-2.5, 1.5) {$\scriptstyle{\Gamma_{v_1}}$};
		\node[draw,circle] (v11) at (-1.3, .3) {$\scriptstyle{\Gamma_{x_{\sigma(k)}}}$};
		\node[label=center:{$\cdots$}] at (-2.5, .3) {};
		\node[draw,circle] (v31) at (-3.7, .3) {$\scriptstyle{\Gamma_{x_{\sigma(i+1)}}}$};
		\node[ext,label=270:{\scalebox{0.5}{$1$}}] (w11) at (-5.25, -1) {};
		\node[label=center:{$\scriptstyle\cdots$}] (w21) at (-5.6, -1) {};
		\node[ext,label=270:{\scalebox{0.5}{$1$}}] (w31) at (-5.95, -1) {};
		
		\node[ext] (w111) at (-0.45, -1) {};
		\node[ext] (w121) at (-0.8, -1) {};
		\node[label=center:{$\scriptstyle\cdots$}] at (-1.3, -1) {};
		\node[ext] (w131) at (-1.7, -1) {};
		\node[ext] (w141) at (-2.05, -1) {};
		
		\node[ext] (w211) at (-2.85, -1) {};
		\node[ext] (w221) at (-3.2, -1) {};
		\node[label=center:{$\scriptstyle\cdots$}] at (-3.7, -1) {};
		\node[ext] (w231) at (-4.1, -1) {};
		\node[ext] (w241) at (-4.45, -1) {};
		
		\draw (v1) edge[bend right] (v11)
		edge (v11)
		edge[bend left] (v11)
		edge[bend right] (v31)
		edge (v31)
		edge[bend left] (v31)
		edge[bend right=50] (w11)
		edge[bend right=60] (w31);
		
		\draw (v11) edge[bend left] (w111)
		edge (w121)
		edge (w131)
		edge[bend right] (w141);
		
		\draw (v31) edge[bend left] (w211)
		edge (w221)
		edge (w231)
		edge[bend right] (w241);
		
		\node[draw,circle] (v2) at (2.5, 1.5) {$\scriptstyle{\Gamma_{v_2}}$};
		\node[draw,circle] (v21) at (1.3, .3) {$\scriptstyle{\Gamma_{x_{\sigma(1)}}}$};
		\node[label=center:{$\cdots$}] at (2.5, .3) {};
		\node[draw,circle] (v23) at (3.7, .3) {$\scriptstyle{\Gamma_{x_{\sigma(i)}}}$};
		\node[ext,label=270:{\scalebox{0.5}{$1$}}] (w21) at (5.25, -1) {};
		\node[label=center:{$\scriptstyle\cdots$}] (w22) at (5.6, -1) {};
		\node[ext,label=270:{\scalebox{0.5}{$1$}}] (w23) at (5.95, -1) {};
		
		\node[ext] (w211) at (0.45, -1) {};
		\node[ext] (w212) at (0.8, -1) {};
		\node[label=center:{$\scriptstyle\cdots$}] at (1.3, -1) {};
		\node[ext] (w213) at (1.7, -1) {};
		\node[ext] (w214) at (2.05, -1) {};
		
		\node[ext] (w221) at (2.85, -1) {};
		\node[ext] (w222) at (3.2, -1) {};
		\node[label=center:{$\scriptstyle\cdots$}] at (3.7, -1) {};
		\node[ext] (w223) at (4.1, -1) {};
		\node[ext] (w224) at (4.45, -1) {};
		
		\draw (v2) edge[bend right] (v21)
		edge (v21)
		edge[bend left] (v21)
		edge[bend right] (v23)
		edge (v23)
		edge[bend left] (v23)
		edge[bend left=50] (w21)
		edge[bend left=60] (w23);
		
		\draw (v21) edge[bend right] (w211)
		edge (w212)
		edge (w213)
		edge[bend left] (w214);
		
		\draw (v23) edge[bend right] (w221)
		edge (w222)
		edge (w223)
		edge[bend left] (w224);
		
		\draw (v1) edge (v2)
		edge[bend left] (v2);
		\draw (v1) edge (v21)
		edge[bend left=15] (v23);
	\end{tikzpicture}
\end{equation}
and graphs with no edges between $\Gamma_{v_1}$ and $\Gamma_{v_2}$ (see~\eqref{rhside-disconnected}).

\begin{equation}\label{rhside-disconnected} 
	\begin{tikzpicture}[baseline=-.8ex]
		\node[draw,circle] (v1) at (-2.5, 1.5) {$\scriptstyle{\Gamma_{v_1}}$};
		\node[draw,circle] (v11) at (-1.3, .3) {$\scriptstyle{\Gamma_{x_{\sigma(k)}}}$};
		\node[label=center:{$\cdots$}] at (-2.5, .3) {};
		\node[draw,circle] (v31) at (-3.7, .3) {$\scriptstyle{\Gamma_{x_{\sigma(i+1)}}}$};
		\node[ext,label=270:{\scalebox{0.5}{$1$}}] (w11) at (-5.25, -1) {};
		\node[label=center:{$\scriptstyle\cdots$}] (w21) at (-5.6, -1) {};
		\node[ext,label=270:{\scalebox{0.5}{$1$}}] (w31) at (-5.95, -1) {};
		
		\node[ext] (w111) at (-0.45, -1) {};
		\node[ext] (w121) at (-0.8, -1) {};
		\node[label=center:{$\scriptstyle\cdots$}] at (-1.3, -1) {};
		\node[ext] (w131) at (-1.7, -1) {};
		\node[ext] (w141) at (-2.05, -1) {};
		
		\node[ext] (w211) at (-2.85, -1) {};
		\node[ext] (w221) at (-3.2, -1) {};
		\node[label=center:{$\scriptstyle\cdots$}] at (-3.7, -1) {};
		\node[ext] (w231) at (-4.1, -1) {};
		\node[ext] (w241) at (-4.45, -1) {};
		
		\draw (v1) edge[bend right] (v11)
		edge (v11)
		edge[bend left] (v11)
		edge[bend right] (v31)
		edge (v31)
		edge[bend left] (v31)
		edge[bend right=50] (w11)
		edge[bend right=60] (w31);
		
		\draw (v11) edge[bend left] (w111)
		edge (w121)
		edge (w131)
		edge[bend right] (w141);
		
		\draw (v31) edge[bend left] (w211)
		edge (w221)
		edge (w231)
		edge[bend right] (w241);
		
		\node[draw,circle] (v2) at (2.5, 1.5) {$\scriptstyle{\Gamma_{v_2}}$};
		\node[draw,circle] (v21) at (1.3, .3) {$\scriptstyle{\Gamma_{x_{\sigma(1)}}}$};
		\node[label=center:{$\cdots$}] at (2.5, .3) {};
		\node[draw,circle] (v23) at (3.7, .3) {$\scriptstyle{\Gamma_{x_{\sigma(i)}}}$};
		\node[ext,label=270:{\scalebox{0.5}{$1$}}] (w21) at (5.25, -1) {};
		\node[label=center:{$\scriptstyle\cdots$}] (w22) at (5.6, -1) {};
		\node[ext,label=270:{\scalebox{0.5}{$1$}}] (w23) at (5.95, -1) {};
		
		\node[ext] (w211) at (0.45, -1) {};
		\node[ext] (w212) at (0.8, -1) {};
		\node[label=center:{$\scriptstyle\cdots$}] at (1.3, -1) {};
		\node[ext] (w213) at (1.7, -1) {};
		\node[ext] (w214) at (2.05, -1) {};
		
		\node[ext] (w221) at (2.85, -1) {};
		\node[ext] (w222) at (3.2, -1) {};
		\node[label=center:{$\scriptstyle\cdots$}] at (3.7, -1) {};
		\node[ext] (w223) at (4.1, -1) {};
		\node[ext] (w224) at (4.45, -1) {};
		
		\draw (v2) edge[bend right] (v21)
		edge (v21)
		edge[bend left] (v21)
		edge[bend right] (v23)
		edge (v23)
		edge[bend left] (v23)
		edge[bend left=50] (w21)
		edge[bend left=60] (w23);
		
		\draw (v21) edge[bend right] (w211)
		edge (w212)
		edge (w213)
		edge[bend left] (w214);
		
		\draw (v23) edge[bend right] (w221)
		edge (w222)
		edge (w223)
		edge[bend left] (w224);
		
		\draw (v1) edge (v21)
		edge[bend left=15] (v23);
	\end{tikzpicture}
\end{equation}
Note that the latter one also appears as the corresponding part in
\[
\Gamma_{v_2}\cdot\big(
\Gamma_{v_1}\cdot(\Gamma_{x_{\sigma'(1)}}; \cdots; \Gamma_{x_{\sigma'(j)}}); \Gamma_{x_{\sigma'(j+1)}}; \cdots; \Gamma_{x_{\sigma'(k)}}
\big),
\]
where $\Gamma_{x_{\sigma'(1)}}, \ldots, \Gamma_{x_{\sigma'(j)}}$ are graphs to which $\Gamma_{v_1}$ was attached in~\eqref{rhside-disconnected}. Therefore, part of~\eqref{rhside} that remains, has only graphs of the form~\eqref{rhside-connected}, i.e. the graphs where at least one hair of $\Gamma_{v_1}$ is attached to $\Gamma_{v_2}$. The same holds for
\[
\Gamma_{v_2}\cdot\big(
\Gamma_{v_1}\cdot(\Gamma_{x_{\sigma(1)}}; \cdots; \Gamma_{x_{\sigma(i)}}); \Gamma_{x_{\sigma(i+1)}}; \cdots; \Gamma_{x_{\sigma(k)}}
\big).
\]
As the result we have that the only graphs contributing to the right-hand side are the ones with hairs of $\Gamma_{v_1}$ attached to $\Gamma_{v_2}$ or other way around, which exactly describes the left-hand side.

Finally, we need to check compatibility with the differentials.
Namely, we need to verify that
\begin{equation}\label{eq:dc0}
	(d_{split} \Gamma_v)\cdot = [d_{split}, \Gamma_v\cdot].
\end{equation}
Let us start by unfolding the right hand side
\begin{equation}\label{eq:dc1}
	d_{split}\circ (\Gamma_v\cdot) - (-1)^{|\Gamma_v|}(\Gamma_v\cdot)\circ d_{split}.
\end{equation}
The result of the first summand applied to $\Gamma_{x_1}, \ldots, \Gamma_{x_k}$ splits into two pieces:
\begin{align*}
	d_{split}\circ (\Gamma_v\cdot)(\Gamma_{x_1}; \cdots; \Gamma_{x_k})
	&=
	d_{split}\Biggl(\sum\pm
	\begin{tikzpicture}[baseline=-.8ex]
		\node[draw,circle] (v) at (0, 1.5) {$\scriptstyle{\Gamma_v}$};
		\node[draw,circle] (v1) at (-1.2,.3) {$\scriptstyle{\Gamma_{x_1}}$};
		\node[label=center:{$\cdots$}] at (0,.3) {};
		\node[draw,circle] (v3) at (1.2,.3) {$\scriptstyle{\Gamma_{x_k}}$};
		\node[ext,label=270:{\scalebox{0.5}{$1$}}] (w1) at (2.75,-1) {};
		\node[label=center:{$\scriptstyle\cdots$}] (w2) at (3.1,-1) {};
		\node[ext,label=270:{\scalebox{0.5}{$1$}}] (w3) at (3.45,-1) {};
		\node[ext] (w11) at (-2.05,-1) {};
		\node[ext] (w12) at (-1.7,-1) {};
		\node[label=center:{$\scriptstyle\cdots$}] at (-1.2,-1) {};
		\node[ext] (w13) at (-.8, -1) {};
		\node[ext] (w14) at (-.45,-1) {};
		\node[ext] (w21) at (0.35,-1) {};
		\node[ext] (w22) at (0.7,-1) {};
		\node[label=center:{$\scriptstyle\cdots$}] at (1.2,-1) {};
		\node[ext] (w23) at (1.6,-1) {};
		\node[ext] (w24) at (1.95,-1) {};
		\draw (v) edge[bend right] (v1)
		edge (v1)
		edge[bend left] (v1)
		edge[bend right] (v3)
		edge (v3)
		edge[bend left] (v3)
		edge[bend left=50] (w1)
		edge[bend left=60] (w3);
		\draw (v1)
		edge[bend right] (w11)
		edge (w12)
		edge (w13)
		edge[bend left] (w14);
		\draw (v3)
		edge[bend right] (w21)
		edge (w22)
		edge (w23)
		edge[bend left] (w24);
	\end{tikzpicture}
	\Biggr)
	\\
	&=
	\circled{\RN{1}}
	+
	\circled{\RN{2}},
\end{align*}
where $\circled{\RN{1}}$ is the sum of all terms where $d_{split}$ is applied to the internal vertices of the graph $\Gamma_v$,
\[
\sum\pm
\begin{tikzpicture}[baseline=-.8ex]
	\node[draw,circle] (v) at (0, 1.5) {$\scriptstyle{d_{split}\Gamma_v}$};
	\node[draw,circle] (v1) at (-1.2,.3) {$\scriptstyle{\Gamma_{x_1}}$};
	\node[label=center:{$\cdots$}] at (0,.3) {};
	\node[draw,circle] (v3) at (1.2,.3) {$\scriptstyle{\Gamma_{x_k}}$};
	\node[ext,label=270:{\scalebox{0.5}{$1$}}] (w1) at (2.75,-1) {};
	\node[label=center:{$\scriptstyle\cdots$}] (w2) at (3.1,-1) {};
	\node[ext,label=270:{\scalebox{0.5}{$1$}}] (w3) at (3.45,-1) {};
	\node[ext] (w11) at (-2.05,-1) {};
	\node[ext] (w12) at (-1.7,-1) {};
	\node[label=center:{$\scriptstyle\cdots$}] at (-1.2,-1) {};
	\node[ext] (w13) at (-.8, -1) {};
	\node[ext] (w14) at (-.45,-1) {};
	\node[ext] (w21) at (0.35,-1) {};
	\node[ext] (w22) at (0.7,-1) {};
	\node[label=center:{$\scriptstyle\cdots$}] at (1.2,-1) {};
	\node[ext] (w23) at (1.6,-1) {};
	\node[ext] (w24) at (1.95,-1) {};
	\draw (v) edge[bend right] (v1)
	edge (v1)
	edge[bend left] (v1)
	edge[bend right] (v3)
	edge (v3)
	edge[bend left] (v3)
	edge[bend left=50] (w1)
	edge[bend left=60] (w3);
	\draw (v1)
	edge[bend right] (w11)
	edge (w12)
	edge (w13)
	edge[bend left] (w14);
	\draw (v3)
	edge[bend right] (w21)
	edge (w22)
	edge (w23)
	edge[bend left] (w24);
\end{tikzpicture}
\]
and $\circled{\RN{2}}$ is the remaining sum, the sum of the terms where $d_{split}$ is applied to the graphs $\Gamma_{x_1}, \ldots, \Gamma_{x_k}$.
\[
\sum\sum_i\pm
\begin{tikzpicture}[baseline=-.8ex]
	\node[draw,circle] (v) at (0, 1.5) {$\scriptstyle{\Gamma_v}$};
	
	\node[draw,circle] (v1) at (-1.8,.3) {$\scriptstyle{\Gamma_{x_1}}$};
	\node[label=center:{$\scriptstyle\cdots$}] at (-0.9,.3) {}; 
	\node[draw,circle,minimum size=10mm,inner sep=0pt] (v2) 
	at (0,.3) {$\scriptstyle d_{\scriptscriptstyle split}\Gamma_{x_i}$};
	\node[label=center:{$\scriptstyle\cdots$}] at (0.9,.3) {};
	\node[draw,circle] (v3) at (1.8,.3) {$\scriptstyle{\Gamma_{x_k}}$};
	
	\node[ext,label=270:{\scalebox{0.5}{$1$}}] (w1) at (3.2,-1) {};
	\node[label=center:{$\scriptstyle\cdots$}] (w2) at (3.55,-1) {};
	\node[ext,label=270:{\scalebox{0.5}{$1$}}] (w3) at (3.9,-1) {};
	
	\node[ext] (w11) at (-2.5,-1) {};
	\node[ext] (w12) at (-2.1,-1) {};
	\node[label=center:{$\scriptstyle\cdots$}] at (-1.8,-1) {};
	\node[ext] (w13) at (-1.5, -1) {};
	\node[ext] (w14) at (-1.1,-1) {};
	
	\node[ext] (w31) at (-0.7,-1) {};
	\node[ext] (w32) at (-0.3,-1) {};
	\node[label=center:{$\scriptstyle\cdots$}] at (0.0,-1) {};
	\node[ext] (w33) at (0.3,-1) {};
	\node[ext] (w34) at (0.7,-1) {};
	
	\node[ext] (w21) at (1.2,-1) {};
	\node[ext] (w22) at (1.5,-1) {};
	\node[label=center:{$\scriptstyle\cdots$}] at (1.8,-1) {};
	\node[ext] (w23) at (2.1,-1) {};
	\node[ext] (w24) at (2.4,-1) {};
	
	\draw (v) edge[bend right] (v1)
	edge (v1)
	edge[bend left] (v1)
	edge[bend right] (v2)
	edge (v2)
	edge[bend left] (v2)
	edge[bend right] (v3)
	edge (v3)
	edge[bend left] (v3)
	edge[bend left=50] (w1)
	edge[bend left=60] (w3);
	
	\draw (v1) edge[bend right] (w11)
	edge (w12)
	edge (w13)
	edge[bend left] (w14);
	
	\draw (v2) edge[bend right] (w31)
	edge (w32)
	edge (w33)
	edge[bend left] (w34);
	
	\draw (v3) edge[bend right] (w21)
	edge (w22)
	edge (w23)
	edge[bend left] (w24);
\end{tikzpicture}
\]
Note that $\circled{\RN{1}}$ is exactly $(d_{split}\Gamma_v)\cdot (\Gamma_{x_1}; \cdots; \Gamma_{x_k})$. And with our sign conventions $\circled{\RN{2}}$ is equal to $(-1)^{|\Gamma_v|}\Gamma_v\cdot d_{split}(\Gamma_{x_1};\cdots; \Gamma_{x_k})$, which is exactly the remaining term in \eqref{eq:dc1}. This concludes the verification of \eqref{eq:dc0}.

Similarly the right action of $\hgc^\#_{U, n}$ is defined by
\[
\Big(
\begin{tikzpicture}[baseline=-.8ex]
\node[draw,circle] (v) at (0,.3) {$\Gamma_{x_1}$};
\node[ext] (w1) at (-.7,-.5) {};
\node[ext] (w2) at (-.35,-.5) {};
\node[label=center:{$\scriptstyle\cdots$}] (w3) at (0,-.5) {};
\node[ext] (w4) at (.35,-.5) {};
\node[ext] (w5) at (.7,-.5) {};
\draw (v) edge[bend right] (w1) edge (w2) edge (w4) edge[bend left] (w5);
\end{tikzpicture};
\cdots;
\begin{tikzpicture}[baseline=-.8ex]
\node[draw,circle] (v) at (0,.3) {$\Gamma_{x_k}$};
\node[ext] (w1) at (-.7,-.5) {};
\node[ext] (w2) at (-.35,-.5) {};
\node[label=center:{$\scriptstyle\cdots$}] (w3) at (0,-.5) {};
\node[ext] (w4) at (.35,-.5) {};
\node[ext] (w5) at (.7,-.5) {};
\draw (v) edge[bend right] (w1) edge (w2) edge (w4) edge[bend left] (w5);
\end{tikzpicture}
\Big)
\cdot
\begin{tikzpicture}[baseline=-.8ex]
\node[draw,circle] (v) at (0,.3) {$\Gamma_u$};
\node[ext] (w1) at (-.7,-.5) {};
\node[ext] (w2) at (-.35,-.5) {};
\node[label=center:{$\scriptstyle\cdots$}] (w3) at (0,-.5) {};
\node[ext] (w4) at (.35,-.5) {};
\node[ext] (w5) at (.7,-.5) {};
\draw (v) edge[bend right] (w1) edge (w2) edge (w4) edge[bend left] (w5);
\end{tikzpicture}
\coloneqq
\sum\pm
\begin{tikzpicture}[baseline=-.8ex]
\node[draw,circle] (v) at (0, 0.3) {$\scriptstyle{\Gamma_u}$};
\node[draw,circle] (v1) at (-1.2,1.5) {$\scriptstyle{\Gamma_{x_1}}$};
\node[label=center:{$\cdots$}] at (0,1.5) {};
\node[draw,circle] (v3) at (1.2,1.5) {$\scriptstyle{\Gamma_{x_k}}$};
\node[ext] (w11) at (-2.05,-1) {};
\node[ext] (w12) at (-1.7,-1) {};
\node[ext] (w23) at (1.6,-1) {};
\node[ext] (w24) at (1.95,-1) {};
\node[ext] (w01) at (-.7,-1) {};
\node[ext] (w02) at (-.35,-1) {};
\node[label=center:{$\scriptstyle\cdots$}] at (0,-1) {};
\node[ext] (w04) at (.35,-1) {};
\node[ext] (w05) at (.7,-1) {};
\draw (v1) edge (v)
edge[bend left] (v)
edge[bend right] (v)
edge[bend right] (w11)
edge (w12);
\draw (v3) edge (v)
edge[bend left] (v)
edge[bend right] (v)
edge (w23)
edge[bend left] (w24);
\draw (v) edge[bend right] (w01) edge (w02) edge (w04) edge[bend left] (w05);
\end{tikzpicture},
\]
where the sum runs over all ways to attach $\Gamma_{x_1}, \ldots, \Gamma_{x_k}$ to $\Gamma_u$ in such way that
\begin{itemize}[leftmargin=0.04\textwidth]
\item the resulting graph is internally connected;
\item all $U^*$ decorated internal vertices of $\Gamma_u$  are merged;
\item hairs of $\Gamma_{x_1}, \ldots, \Gamma_{x_i}$ can remain unattached.
\end{itemize}
A similar argument as before shows that
\[
    \hgc_{U,n}^{\#\, op} \to \Der_{L_\infty} (\hgc^\#_{U, V, n})
\]
is a Lie algebra homomorphism.

\begin{remark}
One can consider internal vertex decorations by elements of $V^*$ as "hairs sticking up", with this interpretation of internal vertex decorations the above pictures become vertically symmetric.
\end{remark}

One can easily see that both actions commute. In particular, one can use Maurer-Cartan elements $Z_u \in \MC(\hgc^\#_{U,n})$ and $Z_v \in \MC(\hgc^\#_{V,n})$ from each Lie algebra to twist the $L_\infty$-structure on $\hgc^\#_{U,V,n}$.

\subsection{Graphical composition}\label{sec:graphical composition}
Consider the three (abelian) $L_\infty$-algebras $\hgc^\#_{U,V,n}$, $\hgc^\#_{V, W,n}$ and $\hgc^\#_{U,W,n}$.

There is a morphism
\begin{equation}\label{graphical-composition-map}
    \hgc^\#_{U,V,n} \oplus \hgc^\#_{V,W,n} \to \hgc^\#_{U,W,n}
\end{equation}
of $\hgc^\#_{U,n}$-$\hgc^\#_{W,n}$-bimodules that is invariant under the diagonal action of $\hgc^\#_{V,n}$ on the source. We call it the \emph{graphical composition}.
As before, we define only the coefficients of the $L_\infty$-morphism
\[
    S^{i+j}\Big((\hgc^\#_{U,V,n} \oplus \hgc^\#_{V,W,n})[1]\Big) \to \hgc^\#_{U,W,n}[2].
\]

\[
\begin{tikzpicture}[baseline=-.8ex]
\node[draw,circle] (v) at (0,.3) {$\Gamma_{x_1}$};
\node[ext] (w1) at (-.7,-.5) {};
\node[ext] (w2) at (-.35,-.5) {};
\node[label=center:{$\scriptstyle\cdots$}] (w3) at (0,-.5) {};
\node[ext] (w4) at (.35,-.5) {};
\node[ext] (w5) at (.7,-.5) {};
\draw (v) edge[bend right] (w1) edge (w2) edge (w4) edge[bend left] (w5);
\end{tikzpicture}
\cdots
\begin{tikzpicture}[baseline=-.8ex]
\node[draw,circle] (v) at (0,.3) {$\Gamma_{x_i}$};
\node[ext] (w1) at (-.7,-.5) {};
\node[ext] (w2) at (-.35,-.5) {};
\node[label=center:{$\scriptstyle\cdots$}] (w3) at (0,-.5) {};
\node[ext] (w4) at (.35,-.5) {};
\node[ext] (w5) at (.7,-.5) {};
\draw (v) edge[bend right] (w1) edge (w2) edge (w4) edge[bend left] (w5);
\end{tikzpicture}
\cdot
\begin{tikzpicture}[baseline=-.8ex]
\node[draw,circle] (v) at (0,.3) {$\Gamma_{y_1}$};
\node[ext] (w1) at (-.7,-.5) {};
\node[ext] (w2) at (-.35,-.5) {};
\node[label=center:{$\scriptstyle\cdots$}] (w3) at (0,-.5) {};
\node[ext] (w4) at (.35,-.5) {};
\node[ext] (w5) at (.7,-.5) {};
\draw (v) edge[bend right] (w1) edge (w2) edge (w4) edge[bend left] (w5);
\end{tikzpicture}
\cdots
\begin{tikzpicture}[baseline=-.8ex]
\node[draw,circle] (v) at (0,.3) {$\Gamma_{y_j}$};
\node[ext] (w1) at (-.7,-.5) {};
\node[ext] (w2) at (-.35,-.5) {};
\node[label=center:{$\scriptstyle\cdots$}] (w3) at (0,-.5) {};
\node[ext] (w4) at (.35,-.5) {};
\node[ext] (w5) at (.7,-.5) {};
\draw (v) edge[bend right] (w1) edge (w2) edge (w4) edge[bend left] (w5);
\end{tikzpicture}
\mapsto
\sum\pm
\begin{tikzpicture}[baseline=-.8ex]
\node[draw, circle] (v1) at (-2.4,2) {$\Gamma_{y_1}$};
\node[label=center:{$\scriptstyle\cdots$}] at (-1.2,2) {};
\node[draw, circle] (v2) at (0,2) {$\Gamma_{y_{i'}}$};
\node[label=center:{$\scriptstyle\cdots$}] at ( 1.2,2) {};
\node[draw, circle] (v3) at (2.4,2) {$\Gamma_{y_i}$};
\node[draw, circle] (w1) at (-1.2,.3) {$\Gamma_{x_1}$};
\node[label=center:{$\scriptstyle\cdots$}] at (0,.3) {};
\node[draw, circle] (w2) at (1.2,0.3) {$\Gamma_{x_j}$};
\node[ext] (w11) at (-1.9,-1) {};
\node[ext] (w12) at (-1.55,-1) {};
\node[label=center:{$\scriptstyle\cdots$}] at (-1.2,-1) {};
\node[ext] (w14) at (-0.85,-1) {};
\node[ext] (w15) at (-0.5,-1) {};
\node[ext] (w21) at (0.5,-1) {};
\node[ext] (w22) at (0.85,-1) {};
\node[label=center:{$\scriptstyle\cdots$}] at (1.2,-1) {};
\node[ext] (w24) at (1.55,-1) {};
\node[ext] (w25) at (1.9,-1) {};
\node[ext,label=270:{\scalebox{0.5}{$1$}}] (v11) at (-3.1,-1) {};
\node[ext,label=270:{\scalebox{0.5}{$1$}}] (v12) at (-2.75,-1) {};
\node[ext,label=270:{\scalebox{0.5}{$1$}}] (v21) at (3.1,-1) {};
\node[ext,label=270:{\scalebox{0.5}{$1$}}] (v22) at (2.75,-1) {};
\draw (w1) edge[bend right] (w11)
           edge (w12)
           edge (w14)
           edge[bend left] (w15);
\draw (w2) edge[bend right] (w21)
           edge (w22)
           edge (w24)
           edge[bend left] (w25);
\draw (v1) edge[bend right] (v11)
           edge[bend right] (v12)
           edge (w2);
\draw (v3) edge[bend left] (v21) 
           edge[bend left] (v22);
\draw (v1) edge (w1) edge[bend right] (w1) edge[bend left] (w1);
\draw (v2) edge (w1) edge[bend right] (w1) edge[bend left] (w1)
        edge (w2) edge[bend right] (w2) edge[bend left] (w2);
\draw (v3) edge (w2) edge[bend right] (w2) edge[bend left] (w2);
\end{tikzpicture}
\]
where the sum runs over all ways to attach $\Gamma_{y_1}, \ldots, \Gamma_{y_j}$ to $\Gamma_{x_1}, \ldots, \Gamma_{x_i}$ in such a way that
\begin{itemize}[leftmargin=0.04\textwidth]
\item the resulting graph is internally connected;
\item all $V^*$ decorated internal vertices of $\Gamma_{x_1}, \ldots, \Gamma_{x_i}$ are merged;
\item all $V$ decorated hairs of $\Gamma_{y_1}, \ldots, \Gamma_{y_j}$ are attached;
\item hairs of $\Gamma_{y_1}, \ldots, \Gamma_{y_j}$ decorated by~$1$ can remain unattached.
\end{itemize}

The morphism
\[
    \hgc^\#_{U,V,n} \oplus \hgc^\#_{V,W,n} \to \hgc^\#_{U,W,n}
\]
is $\hgc^\#_{U,n}$-$\hgc^\#_{W,n}$-equivariant and invariant under the diagonal action of $\hgc^\#_{V, n}$. A similar argument to the one in~\hr[Section]{graph-action} shows that the morphism commutes with the differentials. By \hr[Corollary]{l-infty-twist-composition}, for a triple of Maurer-Cartan elements $Z_u \in \hgc^\#_{U,n}$, $Z_v \in \hgc^\#_{V,n}$ and $Z_w \in \hgc^\#_{W,n}$ there is a morphism of twisted $L_\infty$-algebras
\[
    \hgc_{U,V,n}^{\#\, Z_u, Z_v} \oplus \hgc_{V,W,n}^{\#\, Z_v, Z_w} \to \hgc_{U,W,n}^{\#\, Z_u, Z_w}.
\]

Moreover, the graphical composition is associative.

\section{Models for graphical composition}
In this section we will be freely switching between $e_n^c$-comodule $\Graphs_{U,n}^\#$ and $e_n$-module $\Graphs_{U,n}^{\#\,*}$. The reason behind is that it is much easier to give a combinatorial description of the operations for modules, but much harder to provide algebraic arguments.

We want to construct a map of simplicial sets
\[
    \alpha_{U,V}\colon \MC_{\sbullet}(\hgc^\#_{U,V,n}) \to \map_n(\Graphs^{\#\,*}_{U,n}, \Graphs^{\#\,*}_{V,n})
\]
that models the composition of mapping spaces, i.e. makes the following diagram commutative
\begin{equation}\label{composition-model}
\begin{tikzcd}
\MC_{\sbullet}(\hgc^\#_{U,V,n}) \times \MC_{\sbullet}(\hgc^\#_{V,W,n})
\arrow[r]
\arrow[d, "\alpha_{U,V} \times \alpha_{V, W}"]
&
\MC_{\sbullet}(\hgc^\#_{U,W,n})
\arrow[d, "\alpha_{U,W}"]
\\
\map_n(\Graphs^{\#\,*}_{U,n}, \Graphs^{\#\,*}_{V,n})\times\map_n(\Graphs^{\#\,*}_{V,n}, \Graphs^{\#\,*}_{W,n})
\arrow[r]
&
\map_n(\Graphs^{\#\,*}_{U,n}, \Graphs^{\#\,*}_{W,n}).
\end{tikzcd} 
\end{equation}

\begin{construction}\label{graphical-composition}
Let
$\Gamma_x =
\begin{tikzpicture}[baseline=-.8ex]
\node[draw,circle] (v) at (0,.3) {$\Gamma_x$};
\node[ext] (w1) at (-.7,-.5) {};
\node[ext] (w2) at (-.35,-.5) {};
\node[label=center:{$\scriptstyle\cdots$}] (w3) at (0,-.5) {};
\node[ext] (w4) at (.35,-.5) {};
\node[ext] (w5) at (.7,-.5) {};
\draw (v) edge[bend right] (w1) edge (w2) edge (w4) edge[bend left] (w5);
\end{tikzpicture}
\in \MC(\hgc^\#_{U, V, n})$ be a Maurer-Cartan element in $\hgc^\#_{U,V,n}$.
Define $\alpha_{U,V}(\Gamma_x) \in \mor_n(\Graphs^{\#\,*}_{U,n}, \Graphs^{\#\,*}_{V,n})$ by the formula
\[
\alpha_{U,V}(\Gamma_x)(
\begin{tikzpicture}[baseline=-.8ex]
\node[draw,circle] (v) at (0,.3) {$\gamma_u$};
\node[ext] (w1) at (-.7,-.5) {};
\node[ext] (w2) at (-.35,-.5) {};
\node[label=center:{$\scriptstyle\cdots$}] (w3) at (0,-.5) {};
\node[ext] (w4) at (.35,-.5) {};
\node[ext] (w5) at (.7,-.5) {};
\draw (v) edge[bend right] (w1) edge (w2) edge (w4) edge[bend left] (w5);
\end{tikzpicture})
\coloneqq
\sum_{i \geq 1}
\sum
\pm
\begin{tikzpicture}[baseline=-.8ex]
\node[draw,circle] (v0) at (-1.3, 1.3) {$\Gamma_x$};
\node[label=center:{$\scriptstyle\cdots$}] at (0,1.3) {};
\node[draw,circle] (v1) at ( 1.3, 1.3) {$\Gamma_x$};
\node[draw,circle] (v) at (0,0) {$\gamma_u$};
\node[ext] (w1) at (-.7,-1) {};
\node[ext] (w2) at (-.35,-1) {};
\node[label=center:{$\scriptstyle\cdots$}] at (0,-1) {};
\node[ext] (w4) at (.35,-1) {};
\node[ext] (w5) at (.7,-1) {};
\draw (v0) edge[bend right] (v) edge[bend right=60] (v) edge[bend left] (v) edge[bend left=60] (v);
\draw (v1) edge[bend right] (v) edge[bend right=60] (v) edge[bend left] (v) edge[bend left=60] (v);
\draw (v) edge[bend right] (w1) edge (w2) edge[bend left] (w2) edge (w4) edge (w5) edge[bend left] (w5);
\end{tikzpicture}
\]
where the sum runs over all ways to attach $i$ copies of $\Gamma_x$ to $\gamma_u \in \Graphs^{\#\,*}_{U,n}$ in such way that
\begin{itemize}[leftmargin=0.04\textwidth]
\item all hairs of $\Gamma_x$ are attached;
\item all $U^*$ decorated vertices of $\gamma_u$ are merged.
\end{itemize}
The resulting graph has neither decoration by $U$ nor by $U^*$, therefore $\alpha_{U,V}(\Gamma_x)(\gamma_u)$ is in $\Graphs^{\#\,*}_{V,n}$.

Note that $\alpha_{U,V} \colon \mc(\hgc^\#_{U,V,n}) \to \mor_n(\Graphs^{\#\,*}_{U,n}, \Graphs^{\#\,*}_{V,n})$ is equivariant with respect to the actions of $\hgc^\#_{U, n}$ and $\hgc^\#_{V,n}$.

Further on we will also use the dual construction with cutting graphs off (see~\cite[Definition 8.3]{will23}).
\end{construction}

Note that $\alpha_{U,V} \colon \mc(\hgc^\#_{U,V,n}) \to \mor_n(\Graphs^{\#\,*}_{U,n}, \Graphs^{\#\,*}_{V,n})$ is equivariant with respect to the actions of $\hgc^\#_{U, n}$ and $\hgc^\#_{V,n}$.

\begin{lemma}
	The diagram~\eqref{composition-model} with $\alpha_{\sbullet,\sbullet}$ defined in \textup{\hr[Construction]{graphical-composition}} commutes.
\end{lemma}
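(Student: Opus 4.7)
The strategy is to expand both sides of~\eqref{composition-model} as explicit sums of graphs and exhibit a combinatorial bijection. Fix Maurer--Cartan elements $\Gamma_x \in \MC(\hgc^\#_{U,V,n})$ and $\Gamma_y \in \MC(\hgc^\#_{V,W,n})$ and a generator $\gamma_u \in \Graphs^{\#\,*}_{U,n}$. Let $F$ denote the $L_\infty$-morphism~\eqref{graphical-composition-map}, and write its value on the Maurer--Cartan element $(\Gamma_x,\Gamma_y) \in \hgc^\#_{U,V,n}\oplus\hgc^\#_{V,W,n}$ of the abelian source as
\[
F(\Gamma_x,\Gamma_y) \;=\; \sum_{i,j \geq 1}\frac{1}{i!\,j!}\,F_{i+j}\bigl(\Gamma_x^{\otimes i},\Gamma_y^{\otimes j}\bigr).
\]
On simplices, commutativity of~\eqref{composition-model} reduces to the identity
\[
\alpha_{V,W}(\Gamma_y)\bigl(\alpha_{U,V}(\Gamma_x)(\gamma_u)\bigr) \;=\; \alpha_{U,W}\bigl(F(\Gamma_x,\Gamma_y)\bigr)(\gamma_u)
\]
in $\Graphs^{\#\,*}_{W,n}$ for every $\gamma_u$.

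First I would unpack the left-hand side by applying~\hr[Construction]{graphical-composition} twice. The inner application produces a sum over all ways to attach some $i \geq 1$ copies of $\Gamma_x$ to $\gamma_u$ so that every $U^*$-decorated vertex of $\gamma_u$ is paired with a $U$-decorated hair of some $\Gamma_x$-copy and every remaining hair of the $\Gamma_x$-copies is attached to $\gamma_u$. Applying $\alpha_{V,W}(\Gamma_y)$ then attaches some $j \geq 1$ copies of $\Gamma_y$ to that intermediate graph, pairing each $V^*$-decorated vertex (now sitting inside some $\Gamma_x$-copy) with a $V$-decorated hair of a $\Gamma_y$-copy and attaching the remaining $1$-decorated hairs of $\Gamma_y$ to arbitrary vertices. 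Next I would unpack the right-hand side: by the combinatorial description in~\hr[Section]{sec:graphical composition}, each Taylor coefficient $F_{i+j}(\Gamma_x^{\otimes i},\Gamma_y^{\otimes j})$ is a sum of internally-connected ``cluster'' graphs obtained by attaching $j$ copies of $\Gamma_y$ to $i$ copies of $\Gamma_x$ under exactly the same pairing rules; applying $\alpha_{U,W}$ then attaches some $k \geq 1$ of these clusters to $\gamma_u$ via the $U$-hairs remaining on their constituent $\Gamma_x$-copies.

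The combinatorial bijection between the two sides is then the obvious one: every graph appearing in either sum consists of $\gamma_u$ together with a number of $\Gamma_x$- and $\Gamma_y$-copies, where $V$-hairs of the $\Gamma_y$-copies pair with $V^*$-vertices inside the $\Gamma_x$-copies, $U$-hairs of the $\Gamma_x$-copies pair with $U^*$-vertices of $\gamma_u$, and the $1$-hairs attach to arbitrary vertices. On the left-hand side the total graph is enumerated in the order ``first all $\Gamma_x$-copies, then all $\Gamma_y$-copies''; on the right-hand side the very same graph is enumerated by first partitioning the $\Gamma_x$- and $\Gamma_y$-copies into their maximal subsets that remain connected after deleting $\gamma_u$, and then attaching the resulting clusters to $\gamma_u$. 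The internal-connectedness clause in the definition of $F$ makes this partition canonical, while the combinatorial prefactors $\tfrac{1}{i!\,j!\,k!}$ exactly account for the labellings of the copies of $\Gamma_x$, $\Gamma_y$ and of the clusters.

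The main obstacle I anticipate is the sign verification: one must match the Koszul signs from permuting the copies of $\Gamma_x$ and $\Gamma_y$, the orientation signs from the even/odd-$n$ conventions of Section~3 (edge ordering for even $n$; internal-vertex ordering together with edge orientations for odd $n$), and the signs of the natural $U\otimes U^* \to \Q$ and $V\otimes V^* \to \Q$ pairings. Since the bijection above produces identical underlying oriented graphs on both sides, any sign discrepancy must be local to a single hair-to-vertex attachment, so a vertex-by-vertex comparison---entirely analogous to the differential-compatibility check already carried out in~\hr[Section]{graph-action}---is expected to suffice and close the proof.
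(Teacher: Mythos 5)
Your proposal is correct and follows essentially the same route as the paper: both sides are expanded as sums of graphs built from $\gamma_u$ and copies of $\Gamma_x$, $\Gamma_y$, and the match rests on the observation that $V$-decorated hairs of $\Gamma_y$ can only land on $\Gamma_x$-copies while $1$-decorated hairs may land on $\gamma_u$, which corresponds exactly to the ``can remain unattached'' clause in the graphical composition. The paper's proof is just a terser version of your case analysis (it does not spell out the cluster decomposition, symmetry factors, or signs), so your extra detail is compatible rather than divergent.
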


\begin{proof}
We want to show that given $\Gamma_x \in \MC(\hgc^\#_{U,V,n})$ and $\Gamma_y \in \MC(\hgc^\#_{V,W,n})$, the composition $\alpha_{V,W}(\Gamma_y)\alpha_{U,V}(\Gamma_x)$ coincides with $\alpha_{U,W}(\Gamma_y \circ \Gamma_x)$,
where $\Gamma_y \circ \Gamma_x$ is the graphical composition~\eqref{graphical-composition-map}.

Let $\gamma_u \in \Graphs^{\#\,*}_{U,n}$ as before. We split the resulting sum of $\alpha_{V,W}(\Gamma_y)\alpha_{U,V}(\Gamma_x)(\gamma_u)$ into two subsums depending on the way hairs of $\Gamma_y$ are attached to
the graphs in $\alpha_{U,V}(\Gamma_x)(\gamma_u)$.
\begin{enumerate}[leftmargin=0.04\textwidth]
\item All hairs of $\Gamma_y$ are attached to $\Gamma_x$;
\item Some hairs of $\Gamma_y$ are attached to $\gamma_u$. (Note that since $\gamma_u$ does not have any $V^*$-decorations, the only hairs of $\Gamma_y$ attached to $\gamma_u$ can be hairs decorated by $1$.)
\end{enumerate}
In the first case, the resulting graphs correspond to the graphs in $\Gamma_y \circ \Gamma_x$, where all the hairs of $\Gamma_y$ are attached to $\Gamma_x$. In the second case, the resulting graphs correspond to the graphs, where some hairs of $\Gamma_y$ decorated by~$1$ remained unattached. This concludes the proof.
\end{proof}

\subsection{Graphical category $\Gcat_n$.}\label{sec: graphical category}

We begin this section with passing to the main objects of the paper: the "unsharped" version of hairy graph complexes $\hgc_{U, n}$ and $\hgc_{U, V, n}$, and the "unsharped" version of the graph complex $\Graphs_{U,n}$.

As in \cite{will23}, we proceed in two steps. First, we twist by the Maurer-Cartan element
\[
	Z_0 =
	\begin{tikzpicture}
	\node[ext,label=270:{\scalebox{0.5}{$1$}}] (v1) at (0, -.2) {};
	\node[int] (v2) at (0,.2) {};
	
	\draw (v1) edge (v2);
	\end{tikzpicture}
	+
	\sum_{i}
	\begin{tikzpicture}
		\node[ext,label=270:{\scalebox{0.5}{$u_i$}}] (v1) at (0, -.2) {};
		\node[int,label=90:{\scalebox{0.5}{$u_i^*$}}] (v2) at (0,.2) {};
		
		\draw (v1) edge (v2);
	\end{tikzpicture}
	\in \hgc^\#_{U,n},
\]
where $u_i$'s and $u_i^*$'s are a pair of dual bases for $U$ and $U^*$, respectively. Finally, $\hgc_{U, n}$ is defined to be a dg Lie subalgebra
\[
	\hgc_{U, n} \subseteq (\hgc^\#_{U, n})^{Z_0},
\]
generated by graphs all of whose internal vertices have valence at least~$3$.

Similarly, we define $\hgc_{U,V,n}$ as an $L_\infty$ subalgebra
\[
\hgc_{U,V,n} \subseteq (\hgc^\#_{U,V,n})^{Z_0, Z_0'}
\]
generated by graphs all of whose internal vertices have valence at least~$3$. In the above, $Z_0'$ is the following Maurer-Cartan element
\[
Z_0' =
\begin{tikzpicture}
	\node[ext,label=270:{\scalebox{0.5}{$1$}}] (v1) at (0, -.2) {};
	\node[int] (v2) at (0,.2) {};
	
	\draw (v1) edge (v2);
\end{tikzpicture}
+
\sum_{i}
\begin{tikzpicture}
	\node[ext,label=270:{\scalebox{0.5}{$v_i$}}] (v1) at (0, -.2) {};
	\node[int,label=90:{\scalebox{0.5}{$v_i^*$}}] (v2) at (0,.2) {};
	
	\draw (v1) edge (v2);
\end{tikzpicture}
\in \hgc^\#_{V,n}.
\]

To define $\Graphs_{U,n}$ we need an extra piece of differential that comes from the coaction of the cooperad $\Graphs^\#_n$ on $\Graphs^\#_{U,n}$ (see~\cite[Sections~3.11, 8.5]{will23}). Namely,
\[
	Z =
	-\begin{tikzpicture}
		\node[ext] (v1) at (0, -.1) {{\scalebox{.5}{$1$}}};
		\node[int] (v2) at (0,.3) {};
		
		\draw (v1) edge (v2);
	\end{tikzpicture}
	\in \big(\Graphs_n^\#(1)\big)^*
\]
is defined to be the quotient of the twisted graph complex $(\Graphs^\#_{U,n})^{Z_0, Z}$:
\[
	\Graphs_{U,n}(r) \coloneqq (\Graphs^\#_{U,n})^{Z_0, Z}/I(r),
\]
with $I(r)$ spanned by graphs with connected components without external vertices and by graphs with vertices of valency less than 3, with a $U$-decoration contributing~$1$ to the valency.

Note that the action of $\hgc^\#_{U,n}$ on $\Graphs^\#_{U,n}$ (resp. on $\hgc_{U, V, n}$) descends to an action of $\hgc_{U,n}$ on $\Graphs_{U,n}$ (resp. on $\hgc_{U,V,n}$) (see~\cite[Section~8.3]{will23}.

\begin{remark}[see {\cite[Section 8.9]{will23}}]\label{remark:hgc for algebras}
	If $U_1 \coloneqq \Q\cdot 1 \oplus U$ has a structure of an algebra, then there is another canonical twist that is applied to the hairy graph complex. Namely,
	the element
	\[
		Z_{\mu} = \sum c_{\alpha_1, \ldots, \alpha_k, \beta}
		\begin{tikzpicture}[baseline=-.65ex]
			\node[int,label=90:{\scalebox{0.5}{$\alpha_1\cdots\alpha_k$}}] (v) at (0, .3) {};
			\node[ext,label=270:{\scalebox{0.5}{$\beta$}}] (w) at (0,-.3) {};
			\draw (v) edge (w);
		\end{tikzpicture}
		\in \hgc_{U, n}
	\]
	is a Maurer-Cartan that encodes the product in $U_1$. 
	
	Now let $A$ be an algebra, then we define
	\begin{equation}\label{eq:hgc for algebras}
		\hgc_{A, V, n} \coloneqq \hgc^{Z_{\mu}}_{\bar A, V, n}
	\end{equation}
\end{remark}

Let $\Gcat_n$ be a category enriched over simplicial sets defined by the following construction. The objects of $\Gcat$ are pairs $(U, Z_u)$ of a graded vector space $U$ and a Maurer-Cartan element $Z_u \in \hgc_{U, n}$. The mapping space is given by
\[
\map_{\sbullet}\Big((U, Z_u), (V, Z_v)\Big) \coloneqq \MC_{\sbullet}(\hgc_{U,V,n}^{Z_u, Z_v}).
\]
\hr[Corollary]{l-infty-twist-composition} equipped with the composition above implies that the composition is well-defined and associative.

\subsection{Main theorem} Finally, we are ready to state the main theorem.

\pasttheorem{main}

\begin{proof}
	Fix Maurer-Cartan elements $Z_u' = Z_u + Z_0$, $Z_v' = Z_v + Z_0'$ and $Z_w' = Z_w + Z_0''$, with 
	\[
	Z_0'' =
	\begin{tikzpicture}
		\node[ext,label=270:{\scalebox{0.5}{$1$}}] (v1) at (0, -.2) {};
		\node[int] (v2) at (0,.2) {};
		
		\draw (v1) edge (v2);
	\end{tikzpicture}
	+
	\sum_{i}
	\begin{tikzpicture}
		\node[ext,label=270:{\scalebox{0.5}{$w_i$}}] (v1) at (0, -.2) {};
		\node[int,label=90:{\scalebox{0.5}{$w_i^*$}}] (v2) at (0,.2) {};
		
		\draw (v1) edge (v2);
	\end{tikzpicture}
	\in \hgc^\#_{W,n}.
	\]
	We can apply \hr[Corollary]{l-infty-twist-composition} to the diagram~\eqref{composition-model} to get
	the following commutative diagram:
	\[
	\begin{tikzcd}
		\MC_{\sbullet}(\hgc^{Z_u,Z_v}_{U,V,n}) \times \MC_{\sbullet}(\hgc^{Z_v, Z_w}_{V,W,n})
		\arrow[r]
		\arrow[d, "\alpha_{U,V} \times \alpha_{V, W}"]
		&
		\MC_{\sbullet}(\hgc^{Z_u, Z_w}_{U,W,n})
		\arrow[d, "\alpha_{U,W}"]
		\\
		\map_n(\Graphs^{Z_u\,*}_{U,n}, \Graphs^{Z_v\,*}_{V,n})\times\map_n(\Graphs^{Z_v\,*}_{V,n}, \Graphs^{Z_w\,*}_{W,n})
		\arrow[r]
		&
		\map_n(\Graphs^{Z_u\,*}_{U,n}, \Graphs^{Z_w\,*}_{W,n}).
	\end{tikzcd}	
	\]
	Recall~\cite{will23}, that the composite
	\[
	\MC_{\sbullet}(\hgc^{Z_u,Z_v}_{U,V,n}) \xrightarrow{\alpha_{U,V}} \map_n(\Graphs^{Z_u\,*}_{U,n}, \Graphs^{Z_v\,*}_{V,n}) \to \map^h_n(\Graphs^{Z_u\,*}_{U,n}, \Graphs^{Z_v\,*}_{V,n})
	\]
	is a weak equivalence. Therefore, we can apply~\hr[Proposition]{sset-composition-model} to get the assertion.
\end{proof}

\section{Description of $\coind_n^m$}
The composition of mapping spaces of $\mathrm{dg\Lambda Hopf}$-comodules that naturally arises from the composition~\eqref{eq:emb-tilda composition} is defined in \hr[Section]{sec:general case} via $\coind$-$\cores$ adjunction. To give an explicit description of the composition we need to describe the action of the coinduction functor on the modules of configuration-space-type.

\pasttheorem{thm:coinduction graphs}

\begin{proof}
	To simplify the notation we give the proof for the untwisted case. To deal with the twisting we note that our construction below respects the action of hairy graph complexes. The transition from $Z_u$ to $Z_u^t$ happens in \eqref{eq:tree and regrading}.
		
	Since the morphism of cooperads $e_n^c \to e_m^c$ factors through the commutative cooperad $e_\infty^c \coloneqq \Com^c$ (see~\cite{fr-wi20}), one can decompose the coinduction functor as
	\[
	\coind_n^m = \coind_n^\infty\circ\coind_\infty^m.
	\]
	Therefore, we proceed in two steps:
	\begin{enumerate}[label=\textit{\roman*}), leftmargin=0.04\textwidth]
		\item \label{lcg:step one} $\coind_\infty^m(\Graphs_{U,m}) \simeq \mathbb F_{\Graphs_{U,m}^{\mathrm{tree}}(1)}$;
		\item \label{lcg:step two} $\coind_n^\infty(\mathbb F_{\Graphs_{U,m}^{\mathrm{tree}}(1)}) \simeq \Graphs_{U,n}$.
	\end{enumerate}
	To deal with~\hr[Step]{lcg:step one} we note that by~\cite{will23} $\Graphs_{U,m}$ is a $\mathrm{dg\Lambda Hopf}e_n^c$-comodule of configuration-space-type. Which means that there is a weak equivalence
	\[
	\coind_\infty^m(\Graphs_{U,m}) \simeq \mathbb F_{\Graphs_{U,m}(1)}.
	\]
	Let
	\begin{equation}\label{eq:tree inclusion}
		\Graphs_{U,m}^{\mathrm{tree}}(1) \to \Graphs_{U,m}
	\end{equation}
	a natural inclusion of the subspace spanned by trees. We claim that~\eqref{eq:tree inclusion} is a weak equivalence.
	Indeed, consider the following commutative diagram
	\[
	\begin{tikzcd}[column sep=1em]
		\Graphs_{U,m}^{\mathrm{tree}}(1)
		\arrow[rr]
		&&
		\Graphs_{U,m}(1)
		\\
		&
		U_1.
		\arrow[ur]
		\arrow[ul]
	\end{tikzcd}
	\]
	Here the diagonal arrows are given by sending $\alpha \in U_1$ to
	\begin{tikzpicture}
		\node[ext,label=-90:{$\alpha$}] (i2) at (0,.15) {};
	\end{tikzpicture}:
	a single external vertex decorated by $\alpha$. By \cite[Corollary 8.12]{will23}, both diagonal arrows are weak equivalences; hence the horizontal one is as well. In particular, the inclusion~\eqref{eq:tree inclusion} induces a weak equivalence
	\[
	\mathbb{F}_{\Graphs_{U,m}^{\mathrm{tree}}(1)}
	\to
	\mathbb{F}_{\Graphs_{U,m}(1)}
	\]
	which concludes~\hr[Step]{lcg:step one}.
	
	For \hr[Step]{lcg:step two}, we want to construct a quasi-isomorphism
	\[
	\Graphs_{U,n} \to \coind_n^\infty(\mathbb F_{\Graphs_{U,m}^{\mathrm{tree}}(1)}).
	\]
	By the $\cores$-$\coind$ adjunction, the above morphism is equivalent to a morphism of $\Com^c$-comodules
	\[
	\cores_n^\infty(\Graphs_{U,n}) \to \mathbb F_{\Graphs_{U,m}^{\mathrm{tree}}(1)}.
	\]
	We define the latter by taking projection onto the tree subspace
	\begin{equation}\label{eq:tree and regrading}
	\Graphs_{U,n}(r) \to \Graphs_{U,n}^{\mathrm{tree}}(1)^{\otimes r}
	\end{equation}
	and regrading the resulting forest. The adjoint morphism
	\[
	\Graphs_{U,n} \to \coind_n^\infty(\mathbb F_{\Graphs_{U,m}^{\mathrm{tree}}(1)})
	\]
	is a weak equivalence by the same reasoning as in~\eqref{eq:tree inclusion}. This concludes \hr[Step]{lcg:step two} and, therefore, the proof.
\end{proof}

\section{General case}\label{sec:general case}
In this section we want to modify the construction for the composition of mapping spaces to the case of hairy graph complexes corresponding to different dimensions.
Namely, we want to define a "composition"
\begin{equation}\label{dashed-composition}
	\small
	\begin{tikzcd}[column sep=0.5em]
		\map_m^h (\Graphs^{*}_{U,m}, \res^n_m\Graphs^{*}_{V,n})
		\arrow[r, phantom, "\times"]
		&
		\map_n^h (\Graphs^{*}_{V,n}, \res^k_n\Graphs^{*}_{W,k})
		\arrow[rr, dashed]
		&&
		\map_m^h (\Graphs^{*}_{U,m}, \res^k_m\Graphs^{*}_{W,k})
	\end{tikzcd}
\end{equation}
and give a graphical description in terms of the hairy graph complexes.

This is the moment when we switch to dealing comodules over cooperads rather than modules over operads as the proofs get significantly easier in this case.

Namely, in order to define the "composition"~\eqref{dashed-composition} we will define a dual map
\begin{equation}\label{diag:mapping dashed composition}
\small
\begin{tikzcd}[column sep=0.5em]
	\map_n^h(\cores_k^n(\Graphs_{W,k}), \Graphs_{V,n})
	\arrow[r, phantom, "\times"]
	&
	\map_m^h(\cores_n^m(\Graphs_{V,n}), \Graphs_{U,m})
	\arrow[rr, dashed]
	&&
	\map_m^h(\cores_k^m(\Graphs_{W,k}), \Graphs_{U,m}).
\end{tikzcd}
\end{equation}

We define the dashed arrow above via the following diagram
\[
\footnotesize
\begin{tikzcd}[column sep=0.5em]
	\map_n^h(\cores_k^n(\Graphs_{W,k}), \Graphs_{V,n})
	\arrow[r, phantom, "\times"]
	\arrow[d, equal]
	&
	\map_m^h(\cores_n^m(\Graphs_{V,n}), \Graphs_{U,m})
	\arrow[rr, dashed]
	\arrow[d, "\cong"]
	&&
	\map_m^h(\cores_k^m(\Graphs_{W,k}), \Graphs_{U,m})
	\arrow[d, "\cong"]
	\\
	\map_n^h(\cores_k^n(\Graphs_{W,k}), \Graphs_{V,n})
	\arrow[r, phantom, "\times"]
	&
	\map_n^h\big(\Graphs_{V,n}, \coind_n^m(\Graphs_{U,m})\big)
	\arrow[rr]
	&&
	\map_n^h\big(\cores_k^n(\Graphs_{W,k}), \coind_n^m(\Graphs_{U,m})\big),
\end{tikzcd}
\]
where the vertical isomorphisms are given by $\cores$-$\coind$ adjunction, and the bottom horizontal arrow is a natural composition in the category of $\mathrm{dgHopf}e_n^c$-modules-comodules.

In order to pass to hairy graph complexes in the bottom row, we want to ensure that the target comodules are of configuration-space type. Consider the following diagram:
\begin{equation}\label{diag:adjunction composition}
\scriptsize
\begin{tikzcd}[column sep=0.5em]
	\map_n^h(\cores_k^n(\Graphs_{W,k}), \Graphs_{V,n})
	\arrow[r, phantom, "\times"]
	\arrow[d, "\cong"]
	&
	\map_n^h\big(\Graphs_{V,n}, \coind_n^m(\Graphs_{U,m})\big)
	\arrow[rr]
	\arrow[d, equal]
	&&
	\map_n^h\big(\cores_k^n(\Graphs_{W,k}), \coind_n^m(\Graphs_{U,m})\big)
	\arrow[d, "\cong"]
	\\
	\map_k^h\big(\Graphs_{W,k}, \coind_k^n(\Graphs_{V,n})\big)
	\arrow[r, phantom, "\times"]
	\arrow[d, equal]
	&
	\map_n^h\big(\Graphs_{V,n}, \coind_n^m(\Graphs_{U,m})\big)
	\arrow[rr, dashed]
	\arrow[d, "\coind_k^n"]
	&&
	\map_k^h\big(\Graphs_{W,k}, \coind_k^m(\Graphs_{U,m})\big)
	\arrow[d, equal]
	\\
	\map_k^h\big(\Graphs_{W,k}, \coind_k^n(\Graphs_{V,n})\big)
	\arrow[r, phantom, "\times"]
	&
	\map_k^h\big(\coind_k^n(\Graphs_{V,n}), \coind_k^m(\Graphs_{U,m})\big)
	\arrow[rr]
	&&
	\map_k^h\big(\Graphs_{W,k}, \coind_k^m(\Graphs_{U,m})\big),
\end{tikzcd}
\end{equation}
where the top vertical isomorphisms come from the $\cores$–$\coind$ adjunction, while the bottom arrow is given by $\coind_k^n$.
The dashed arrow is defined via surrounding isomorphisms to make both squares commute.

\begin{lemma}
	The diagram~\eqref{diag:adjunction composition} commutes.
\end{lemma}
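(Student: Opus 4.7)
The plan is to verify the asserted commutativity by unwinding the definitions of the two possible ways of producing the dashed arrow in the middle row and identifying them via the naturality of the $\cores_k^n\dashv\coind_k^n$ adjunction, together with the compatibility $\coind_k^n\circ\coind_n^m \cong \coind_k^m$ coming from the factorization of cooperad maps $e_k^c \to e_n^c \to e_m^c$.

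First I would fix the notation: given a pair $(\tilde f, g)$ in the middle row, with $\tilde f\colon \Graphs_{W,k} \to \coind_k^n(\Graphs_{V,n})$ and $g\colon \Graphs_{V,n} \to \coind_n^m(\Graphs_{U,m})$, I write $f\colon \cores_k^n(\Graphs_{W,k}) \to \Graphs_{V,n}$ for the $\cores$-$\coind$ adjunct of $\tilde f$. The top route sends $(\tilde f, g) \mapsto (f,g)$, composes to $g\circ f\colon \cores_k^n(\Graphs_{W,k}) \to \coind_n^m(\Graphs_{U,m})$, and then takes the adjoint back up, landing in $\map_k^h\big(\Graphs_{W,k}, \coind_k^n\coind_n^m(\Graphs_{U,m})\big) = \map_k^h\big(\Graphs_{W,k}, \coind_k^m(\Graphs_{U,m})\big)$. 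The bottom route applies $\coind_k^n$ to $g$ to obtain $\coind_k^n(g)\colon \coind_k^n(\Graphs_{V,n}) \to \coind_k^m(\Graphs_{U,m})$, and then composes with $\tilde f$. The commutativity of the full diagram reduces to the equality
\[
\widetilde{g\circ f} \;=\; \coind_k^n(g)\circ \tilde f
\]
inside $\map_k^h\big(\Graphs_{W,k}, \coind_k^m(\Graphs_{U,m})\big)$.

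This identity is exactly the naturality of the adjunction: the adjunct of $g\circ f$ is the composite $\coind_k^n(g\circ f)\circ \eta = \coind_k^n(g)\circ\coind_k^n(f)\circ \eta = \coind_k^n(g)\circ\tilde f$, where $\eta$ is the unit and $\tilde f = \coind_k^n(f)\circ \eta$ by definition. The only remaining piece is that the two different ways of arriving at $\coind_k^n\coind_n^m(\Graphs_{U,m}) = \coind_k^m(\Graphs_{U,m})$ agree: this is the pseudo-functoriality of $\coind$ along the composition $e_k^c \to e_n^c \to e_m^c$, dual to $\cores_k^m \cong \cores_n^m\circ \cores_k^n$, which is standard.

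The main obstacle is not the algebraic content, which is essentially formal, but rather keeping the derived setting honest: one must verify that the $\cong$-labelled adjunction isomorphisms on the derived mapping spaces $\map^h$ are well-defined and compatible. Since $\cores_k^n$ is the left adjoint and is left Quillen in the comodule model structure, so that $\coind_k^n$ preserves fibrant (and weak) equivalences between fibrant objects, the derived adjunction isomorphisms can be realized by fibrant replacement on the target, and the naturality argument above transports to $\map^h$ without change. No genuinely new input is required beyond the previously cited adjoint pair and the fact that $\coind$ composes well along cooperad maps.
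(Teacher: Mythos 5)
Your argument is correct and is essentially the paper's own: the paper disposes of this lemma by citing a general categorical fact (Proposition~\ref{adjunction-composition-compatibility} in the appendix) that an adjunction's hom-set bijections intertwine composition, proved there by splitting into a functoriality square and a counit square, which is precisely the computation $\widetilde{g\circ f}=\coind_k^n(g)\circ\tilde f$ you carry out via the unit. Your additional remarks on the pseudo-functoriality $\coind_k^n\circ\coind_n^m\cong\coind_k^m$ and on realizing the adjunction on derived mapping spaces are sensible bookkeeping that the paper leaves implicit, but they do not change the substance of the argument.
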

\begin{proof}
	Follows directly from \hr[Proposition]{adjunction-composition-compatibility}.
\end{proof}

\vspace{.5em}
In the light of~\hr[Proposition]{thm:coinduction graphs} the two bottom rows of the diagram~\eqref{diag:adjunction composition} can be rewritten as
\[
\begin{tikzcd}[column sep=0.5em]
	\map_k^h(\Graphs_{W,k}, \Graphs_{V,k})
	\arrow[r, phantom, "\times"]
	\arrow[d, equal]
	&
	\map_n^h(\Graphs_{V,n}, \Graphs_{U,n})
	\arrow[rr, dashed]
	\arrow[d]
	&&
	\map_k^h(\Graphs_{W,k}, \Graphs_{U,k})
	\arrow[d, equal]
	\\
	\map_k^h(\Graphs_{W,k}, \Graphs_{V,k})
	\arrow[r, phantom, "\times"]
	&
	\map_k^h(\Graphs_{V,k}, \Graphs_{U,k})
	\arrow[rr]
	&&
	\map_k^h(\Graphs_{W,k}, \Graphs_{U,k}).
\end{tikzcd}
\]

Now we pass to the hairy graph complexes. The map:
\[
	\alpha_{U,V}\colon \MC_{\sbullet}(\hgc_{U, V, n}) \to \map_n(\Graphs_{V,n}, \Graphs_{U,n})
\]
is dual to the one described in \hr[Construction]{graphical-composition}.
Our goal is to describe the map
\[
	\MC_{\sbullet}(\hgc_{U,V,n}) \to \MC_{\sbullet}(\hgc_{U,V,k})
\]
that makes the diagram
\begin{equation}\label{diag:hairy graphs to mapping space} 
\begin{tikzcd}
	\MC_{\sbullet}(\hgc_{U,V,n})
	\arrow[r, "\alpha^h_{U,V}"]
	\arrow[d]
	&
	\map^h_n(\Graphs_{V,n}, \Graphs_{U,n})
	\arrow[d]
	\\
	\MC_{\sbullet}(\hgc_{U,V,k})
	\arrow[r, "\alpha^h_{U,V}"]
	&
	\map^h_k(\Graphs_{V,k}, \Graphs_{U,k})
\end{tikzcd}
\end{equation}
commute.

We begin with a sequence of maps between (non-derived) mapping spaces.

\begin{equation}\label{diag:hairy graphs to mapping space-expanded}
\begin{tikzcd}
	\MC_{\sbullet}(\hgc_{U,V,n})
	\arrow[r, "\alpha_{U,V}"]
	&
	\map_n(\Graphs_{V,n}, \Graphs_{U,n})
	\arrow[d, "\coind_k^n"]
	\\
	&
	\map_k(\coind_k^n \Graphs_{V,n}, \coind_k^n \Graphs_{U,n})
	\arrow[d]
	\\
	&
	\map_k(\coind_k^\infty \coind_\infty^n \Graphs_{V,n}, \coind_k^\infty \coind_\infty^n \Graphs_{U,n})
	\arrow[d]
	\\
	&
	\map_k(\coind_k^\infty \mathbb F_{\Graphs_{V,n}^{\mathrm{tree}}(1)}, \coind_k^\infty \mathbb F_{\Graphs_{U,n}^{\mathrm{tree}}(1)})
\end{tikzcd}
\end{equation}

The image of $\alpha_{U,V}(\Gamma_x)\in \map_n(\Graphs_{V,n}, \Graphs_{U,n})$ in
$\map_k(\coind_k^\infty \mathbb F_{\Graphs_{V,n}^{\mathrm{tree}}(1)}, \coind_k^\infty \mathbb F_{\Graphs_{U,n}^{\mathrm{tree}}(1)})$ is given on the set of cogenerators by cutting the graph and evaluating multiples of copies of $\Gamma_x$ on the top piece.
\[
\begin{tikzpicture}[baseline=-.8ex]
	\node[draw,circle] (v) at (0,.3) {\tiny$\Gamma_V$};
	\node[ext] (w1) at (0,-.5) {};
	\draw (v) edge(w1);
\end{tikzpicture}
\mapsto
\begin{tikzpicture}[baseline=-.8ex]
	\node[draw,circle] (v0) at (-1.3, 1.3) {\tiny$\Gamma_V'$};
	\node[label=center:{$\scriptstyle\cdots$}] at (0,1.3) {};
	\node[draw,circle] (v1) at ( 1.3, 1.3) {\tiny$\Gamma_V^{(n)}$};
	\node[draw,circle] (v) at (0,0) {\tiny$\widetilde \Gamma_V$};
	\node[ext] (w1) at (0,-1) {};
	\draw (v0) edge (v);
	\draw (v1) edge (v);
	\draw (v) edge (w1);
	\draw[dashed] (-2, {0.5*1.3}) -- (2, {0.5*1.3});
\end{tikzpicture}
\mapsto
\langle \Gamma_x, \Gamma_V'\rangle \cdots \langle \Gamma_x, \Gamma_V^{(n)}\rangle
\begin{tikzpicture}[baseline=-.8ex]
	\node[draw,circle] (v) at (0,0.5) {\tiny$\widetilde\Gamma_V$};
	\node[ext] (w1) at (0,-0.5) {};
	\draw (v) edge (w1);
\end{tikzpicture}
\]
In particular, the result is not zero only on tree part of $\Gamma_x$.
Therefore, the composite
$\MC_{\sbullet}(\hgc_{U, V, n}) \to 
\map_k(\coind_k^\infty \mathbb F_{\Graphs_{V,n}^{\mathrm{tree}}(1)}, \coind_k^\infty \mathbb F_{\Graphs_{U,n}^{\mathrm{tree}}(1)})$
factors through $\MC_{\sbullet}(\hgc_{U,V,n}^{\mathrm{tree}})$.

\begin{lemma}
	The following diagram is commutative
	\[
	\begin{tikzcd}
		\MC_{\sbullet}(\hgc_{U,V,n})
		\arrow[r, "\alpha_{U,V}"]
		\arrow[ddd]
		&
		\map_n(\Graphs_{V,n}, \Graphs_{U,n})
		\arrow[d, "\coind_k^n"]
		\\
		&
		\map_k(\coind_k^n \Graphs_{V,n}, \coind_k^n \Graphs_{U,n})
		\arrow[d]
		\\
		&
		\map_k(\coind_k^\infty \coind_\infty^n \Graphs_{V,n}, \coind_k^\infty \coind_\infty^n \Graphs_{U,n})
		\arrow[d]
		\\
		\MC_{\sbullet}(\hgc^{\mathrm{tree}}_{U,V,n})
		\arrow[r, "\alpha_{U,V}"]
		&
		\map_k(\coind_k^\infty \mathbb F_{\Graphs_{V,n}^{\mathrm{tree}}(1)}, \coind_k^\infty \mathbb F_{\Graphs_{U,n}^{\mathrm{tree}}(1)}),
	\end{tikzcd}
	\]
	where the left vertical arrow is given by projection onto the tree part.\qed
\end{lemma}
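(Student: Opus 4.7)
Both maps in the diagram are morphisms of simplicial sets, natural in the simplicial degree, so commutativity reduces to the $0$-simplex level: it suffices, for each fixed $\Gamma_x \in \MC(\hgc_{U,V,n})$, to check that the two resulting morphisms agree in the mapping space
\[
\map_k\bigl(\coind_k^\infty \mathbb{F}_{\Graphs_{V,n}^{\mathrm{tree}}(1)},\ \coind_k^\infty \mathbb{F}_{\Graphs_{U,n}^{\mathrm{tree}}(1)}\bigr).
\]
Since source and target are both cofreely cogenerated via $\coind_k^\infty \mathbb{F}_{(-)}$, any such morphism of $e_k^c$-comodules is determined by its restriction to cogenerators, i.e.\ by a map of graded vector spaces $\Graphs_{V,n}^{\mathrm{tree}}(1) \to (\coind_k^\infty \mathbb{F}_{\Graphs_{U,n}^{\mathrm{tree}}(1)})(1)$. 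The problem thus reduces to a linear comparison on generators.

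The next step is to unfold both composites on a cogenerator $\Gamma_V \in \Graphs_{V,n}^{\mathrm{tree}}(1)$ using the explicit recipes already available: the formula of \hr[Construction]{graphical-composition} for $\alpha_{U,V}$ and the cut-and-pair description of the coinduction equivalence from \hr[Theorem]{thm:coinduction graphs} (exactly the formula pictured just before this lemma). The right-then-down composite applies $\alpha_{U,V}(\Gamma_x)$ — attaching arbitrarily many copies of $\Gamma_x$ to the $V^*$-decorated internal vertices of $\Gamma_V$ — and then passes to the cogenerator by projecting onto the tree part of the result. The down-then-right composite first replaces $\Gamma_x$ by its tree part $\Gamma_x^{\mathrm{t}}$ and then runs the same construction.

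The key point, which I expect to be the only genuinely nontrivial step, is the vanishing argument: attaching any hairy graph whose underlying graph has positive loop number to the tree $\Gamma_V$ produces a graph with a cycle, which is killed by the tree-projection in the target $\coind_k^\infty \mathbb{F}_{\Graphs_{U,n}^{\mathrm{tree}}(1)}$. Equivalently, in the pairing notation $\langle\Gamma_x,\Gamma_V^{(i)}\rangle$ of the picture above, each top piece $\Gamma_V^{(i)}$ is a tree by construction, so only tree components of $\Gamma_x$ can contribute nontrivially. This identifies the surviving contributions on the two sides and gives the desired equality of maps on cogenerators.

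Finally, I would record the short verification that the projection $\Gamma_x \mapsto \Gamma_x^{\mathrm{t}}$ is in fact defined on $\MC_{\sbullet}$ — i.e.\ that it is a morphism of $L_\infty$-algebras $\hgc_{U,V,n} \to \hgc_{U,V,n}^{\mathrm{tree}}$. This is immediate once one observes that neither the splitting differential nor the $L_\infty$-brackets on $\hgc_{U,V,n}$ can decrease the first Betti number of a graph: the splitting differential only cuts apart vertices, and the Lie-admissible product only glues hairs to decorations, so any cycle already present in $\Gamma_x$ persists. Thus the left vertical arrow is well-defined on Maurer-Cartan sets and the diagram commutes.
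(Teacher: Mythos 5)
Your argument is correct and matches the one the paper intends: the lemma is stated with no separate proof precisely because it follows from the preceding cut-and-pair description of the image of $\alpha_{U,V}(\Gamma_x)$ on cogenerators, where the top pieces cut off a tree cogenerator are themselves trees, so only the tree part of $\Gamma_x$ pairs nontrivially — exactly your key vanishing step. Your additional check that the tree projection respects the $L_\infty$-structure (since neither vertex splitting nor graph attachment can decrease loop order) is a reasonable supplement that the paper leaves implicit.
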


The final piece of the description is in the following lemma with a straightforward proof.

\begin{lemma}
	Let $\MC_{\sbullet}(\hgc_{U, V, n}^{\mathrm{tree}}) \to \MC_{\sbullet}(\hgc_{U, V, k})$ be the regrading map. Then there is a commutative diagram:
	\[
	\begin{tikzcd}
	\MC_{\sbullet}(\hgc^{\mathrm{tree}}_{U,V,n})
	\arrow[r, "\alpha_{U,V}"]
	\arrow[dd]
	&
	\map_k(\coind_k^\infty \mathbb F_{\Graphs_{V,n}^{\mathrm{tree}}(1)}, \coind_k^\infty \mathbb F_{\Graphs_{U,n}^{\mathrm{tree}}(1)}),
	\arrow[d]
	\\
	&
	\map_k(\Graphs_{V,k}, \coind_k^\infty \mathbb F_{\Graphs_{U,n}^{\mathrm{tree}}(1)})
	\\
	\MC_{\sbullet}(\hgc_{U,V,k})
	\arrow[r, "\alpha_{U,V}"]
	&
	\map_k\big(\Graphs_{V,k}, \Graphs_{U,k}\big)
	\arrow[u]
	\end{tikzcd}
	\]	
\end{lemma}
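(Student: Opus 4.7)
The plan is to evaluate both legs of the diagram on a Maurer--Cartan tree $\Gamma_x \in \hgc_{U,V,n}^{\mathrm{tree}}$ and an arbitrary generator $\gamma_v \in \Graphs_{V,k}$, and to match the two resulting elements of $\coind_k^\infty \mathbb F_{\Graphs_{U,n}^{\mathrm{tree}}(1)}$ by direct combinatorial inspection.

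First I would unfold the upper path. By the preceding lemma, $\alpha_{U,V}(\Gamma_x)$ acts on cogenerators of $\coind_k^\infty \mathbb F_{\Graphs_{V,n}^{\mathrm{tree}}(1)}$ via the cut-and-evaluate formula displayed in the excerpt. The right-hand vertical arrow is precomposition with the weak equivalence $\Graphs_{V,k} \to \coind_k^\infty \mathbb F_{\Graphs_{V,n}^{\mathrm{tree}}(1)}$ constructed in the proof of \hr[Theorem]{thm:coinduction graphs}, adjoint to the projection $\Graphs_{V,k}(r) \to \Graphs_{V,k}^{\mathrm{tree}}(1)^{\otimes r}$ composed with regrading from $k$ to $n$. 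Thus $\gamma_v$ is sent to the sum, over cooperadic tree-decompositions of $\gamma_v$, of the configuration obtained by regrading each tree piece to dimension $n$ and then pairing it against $\Gamma_x$ via the natural pairing $\langle \Gamma_x,-\rangle$.

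Next I would unfold the lower path. The left vertical arrow regrades $\Gamma_x$ to a tree $\Gamma_x^{\mathrm{t}} \in \hgc_{U,V,k}$; then $\alpha_{U,V}(\Gamma_x^{\mathrm{t}})$ attaches copies of $\Gamma_x^{\mathrm{t}}$ at the $V^*$-decorated vertices of $\gamma_v$; the final vertical arrow is the dual tree-projection-plus-regrading. Since $\Gamma_x^{\mathrm{t}}$ is itself a tree, attachments of copies of $\Gamma_x^{\mathrm{t}}$ to $\gamma_v$ yield a tree if and only if $\gamma_v$ had a tree cooperadic component that was fully consumed by the attachment. The surviving summands are therefore in natural bijection with the cooperadic tree-decompositions of $\gamma_v$, each piece carrying a copy of $\Gamma_x^{\mathrm{t}}$ --- which is precisely the upper-path formula.

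The main remaining obstacle is sign and regrading bookkeeping: I would verify that the Koszul signs incurred by cutting and pairing on top agree with those incurred by attaching and tree-projecting on the bottom. This check is routine thanks to the tree restriction, since trees carry no edge-ordering or cycle-orientation data, and the degree shift from $k$ to $n$ depends only on the internal-vertex count, which is preserved throughout both compositions.
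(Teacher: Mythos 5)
The paper gives no proof of this lemma at all --- it is merely asserted to have ``a straightforward proof'' --- so there is nothing to compare your argument against; judged on its own terms, your proposal has a genuine gap at its central combinatorial step. The claimed equivalence ``attachments of copies of $\Gamma_x^{\mathrm{t}}$ to $\gamma_v$ yield a tree if and only if $\gamma_v$ had a tree cooperadic component that was fully consumed by the attachment'' is unjustified, and its ``only if'' direction fails for individual graphs. In the comodule picture relevant here, the lower path first cuts copies of the tree $\Gamma_x^{\mathrm{t}}$ off $\gamma_v$ and only afterwards projects the remaining bottom graph onto $\Graphs_{U,k}^{\mathrm{tree}}(1)^{\otimes r}$; if every cycle of a non-tree $\gamma_v$ runs through one of the excised copies, the bottom graph is a tree and survives, whereas the upper path applies the tree projection to $\gamma_v$ itself and kills it. Concretely, let $\gamma_v \in \Graphs_{V,k}(1)$ be a triangle on internal vertices $w_1, w_2, w_3$ with the external vertex joined to $w_1$ and $V$-decorations on $w_2, w_3$, and suppose $\Gamma_x$ contains the tree consisting of a single edge whose two vertices carry the dual $V^*$-decorations and one $U$-decorated hair each. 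Cutting that edge off along $w_2w_3$ leaves the tree on $w_1$ with two $U$-decorations, which survives the tree projection, while the upper path sends the triangle to $0$. So the surviving summands of the two paths are not in bijection; your proof must either show that all such contributions cancel (in this example there is a single cutting, so no visible cancellation), or weaken the conclusion to commutativity up to homotopy --- but both the lemma and your argument assert strict commutativity, and \hr[Proposition]{sset-composition-model} is applied to strictly commuting squares downstream.

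Two secondary issues. You describe the lower path as ``attaching copies of $\Gamma_x^{\mathrm{t}}$ at the $V^*$-decorated vertices of $\gamma_v$'': in the comodule picture $\alpha_{U,V}(\Gamma_x^{\mathrm{t}})$ maps $\Graphs_{V,k}$ to $\Graphs_{U,k}$ by cutting, and $\gamma_v$ carries $V$-decorations rather than $V^*$-decorations; the attaching description belongs to the dual module picture, and mixing the two is precisely what obscures the problematic summands above. Also, the sign check is not as free as you claim: for even $n$ the orientation datum is an ordering of \emph{all} edges, trees included, and regrading between dimensions of different parity trades edge orderings for vertex orderings plus edge directions, so explicit bookkeeping is still needed there.
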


To summarise, the map
\[
\MC_{\sbullet}(\hgc_{U,V,n}) \to \MC_{\sbullet}(\hgc_{U,V,k})
\]
that models $\coind_k^n$ is given by composite of projection onto the tree part and regrading.

Now we can define a model for the middle composition in~\eqref{diag:adjunction composition} via the following diagram
\[
\begin{tikzcd}
	\MC_{\sbullet}(\hgc_{V,W,k})
	\arrow[d, equal]
	\arrow[r, phantom, "\times"]
	&
	\MC_{\sbullet}(\hgc_{U,V,n})
	\arrow[d]
	\arrow[r]
	&
	\MC_{\sbullet}(\hgc_{U,W,k})
	\arrow[d, equal]
	\\
	\MC_{\sbullet}(\hgc_{V,W,k})
	\arrow[r, phantom, "\times"]
	&
	\MC_{\sbullet}(\hgc_{U,V,k})
	\arrow[r]
	&
	\MC_{\sbullet}(\hgc_{U,W,k}).
\end{tikzcd}
\]
Namely, the map
\[
	\MC_{\sbullet}(\hgc_{V,W,k}) \times \MC_{\sbullet}(\hgc_{U,V,n})
	\to
	\MC_{\sbullet}(\hgc_{U,W,k})
\]
is given by the following procedure. For $\Gamma_{x_1}, \ldots, \Gamma_{x_i} \in \hgc_{U,V,n}$ we take the projection onto the tree part, then regrade the result, and, finally, attach $\Gamma_{y_1}, \ldots, \Gamma_{y_j} \in \hgc_{V,W,k}$ to it in all possible ways:
\[
\begin{tikzpicture}[baseline=-.8ex]
	\node[draw,circle] (v) at (0,.3) {$\Gamma_{x_1}$};
	\node[ext] (w1) at (-.7,-.5) {};
	\node[ext] (w2) at (-.35,-.5) {};
	\node[label=center:{$\scriptstyle\cdots$}] (w3) at (0,-.5) {};
	\node[ext] (w4) at (.35,-.5) {};
	\node[ext] (w5) at (.7,-.5) {};
	\draw (v) edge[bend right] (w1) edge (w2) edge (w4) edge[bend left] (w5);
\end{tikzpicture}
\cdots
\begin{tikzpicture}[baseline=-.8ex]
	\node[draw,circle] (v) at (0,.3) {$\Gamma_{x_i}$};
	\node[ext] (w1) at (-.7,-.5) {};
	\node[ext] (w2) at (-.35,-.5) {};
	\node[label=center:{$\scriptstyle\cdots$}] (w3) at (0,-.5) {};
	\node[ext] (w4) at (.35,-.5) {};
	\node[ext] (w5) at (.7,-.5) {};
	\draw (v) edge[bend right] (w1) edge (w2) edge (w4) edge[bend left] (w5);
\end{tikzpicture}
\cdot
\begin{tikzpicture}[baseline=-.8ex]
	\node[draw,circle] (v) at (0,.3) {$\Gamma_{y_1}$};
	\node[ext] (w1) at (-.7,-.5) {};
	\node[ext] (w2) at (-.35,-.5) {};
	\node[label=center:{$\scriptstyle\cdots$}] (w3) at (0,-.5) {};
	\node[ext] (w4) at (.35,-.5) {};
	\node[ext] (w5) at (.7,-.5) {};
	\draw (v) edge[bend right] (w1) edge (w2) edge (w4) edge[bend left] (w5);
\end{tikzpicture}
\cdots
\begin{tikzpicture}[baseline=-.8ex]
	\node[draw,circle] (v) at (0,.3) {$\Gamma_{y_j}$};
	\node[ext] (w1) at (-.7,-.5) {};
	\node[ext] (w2) at (-.35,-.5) {};
	\node[label=center:{$\scriptstyle\cdots$}] (w3) at (0,-.5) {};
	\node[ext] (w4) at (.35,-.5) {};
	\node[ext] (w5) at (.7,-.5) {};
	\draw (v) edge[bend right] (w1) edge (w2) edge (w4) edge[bend left] (w5);
\end{tikzpicture}
\mapsto
\sum\pm
\begin{tikzpicture}[baseline=-.8ex]
	\node[draw, circle] (v1) at (-2.4,2) {$\Gamma_{y_1}$};
	\node[label=center:{$\scriptstyle\cdots$}] at (-1.2,2) {};
	\node[draw, circle] (v2) at (0,2) {$\Gamma_{y_{i'}}$};
	\node[label=center:{$\scriptstyle\cdots$}] at ( 1.2,2) {};
	\node[draw, circle] (v3) at (2.4,2) {$\Gamma_{y_i}$};
	\node[draw, circle] (w1) at (-1.2,.3) {\tiny$\mathrm{tree}(\Gamma_{x_1})$};
	\node[label=center:{$\scriptstyle\cdots$}] at (0,.3) {};
	\node[draw, circle] (w2) at (1.2,0.3) {\tiny$\mathrm{tree}(\Gamma_{x_i})$};
	\node[ext] (w11) at (-1.2,-1) {};
	\node[ext] (w21) at (1.2,-1) {};
	\node[ext,label=270:{\scalebox{0.5}{$1$}}] (v11) at (-3.1,-1) {};
	\node[ext,label=270:{\scalebox{0.5}{$1$}}] (v12) at (-2.75,-1) {};
	\node[ext,label=270:{\scalebox{0.5}{$1$}}] (v21) at (3.1,-1) {};
	\node[ext,label=270:{\scalebox{0.5}{$1$}}] (v22) at (2.75,-1) {};
	\draw (w1) edge (w11);
	\draw (w2) edge (w21);
	\draw (v1) edge[bend right] (v11)
	edge[bend right] (v12)
	edge (w2);
	\draw (v3) edge[bend left] (v21) 
	edge[bend left] (v22);
	\draw (v1) edge (w1) edge[bend right] (w1) edge[bend left] (w1);
	\draw (v2) edge (w1) edge[bend right] (w1) edge[bend left] (w1)
	edge (w2) edge[bend right] (w2) edge[bend left] (w2);
	\draw (v3) edge (w2) edge[bend right] (w2) edge[bend left] (w2);
\end{tikzpicture}.
\]

\pasttheorem{main-general}

\begin{proof}
Since the horizontal maps in the diagram
\[
\begin{tikzcd}
\MC_{\sbullet}(\hgc^{Z_v, Z_w}_{V,W,k})
\arrow[r, phantom, "\times"]
\arrow[d, "\alpha_{U,V}"]
&
\MC_{\sbullet}(\hgc^{Z_u, Z_v}_{U,V,n})
\arrow[r]
\arrow[d, "\alpha_{V,W}"]
&
\MC_{\sbullet}(\hgc^{Z_u, Z_w}_{U,W,k})
\arrow[d, "\alpha_{V,W}"]
\\
\map_k(\Graphs^{Z_w}_{W,k}, \Graphs^{Z_v}_{V,k})
\arrow[r, phantom, "\times"]
&
\map_n(\Graphs^{Z_v}_{V,n}, \Graphs^{Z_u}_{U,n})
\arrow[r]
&
\map_k(\Graphs^{Z_w}_{W,k}, \Graphs^{Z_u}_{U,k})
\end{tikzcd}
\]
were built to make the diagram commute, the assertion follows from \hr[Proposition]{sset-composition-model}.
\end{proof}

\appendix
\section{Compatibility of Adjunctions with Composition}

\begin{proposition}\label{adjunction-composition-compatibility}
Let $L\colon \mathcal C \leftrightarrows \mathcal D\rcolon R$ be a pair of adjoint functors. Then the diagram
\[
\begin{tikzcd}
	\mor_{\mathcal C}(X, Y)
	\arrow[r, phantom, "\times"]
	\arrow[d, "L"]
	&
	\mor_{\mathcal C}(Y, RZ)
	\arrow[r]
	\arrow[d, "\cong"]
	&
	\mor_{\mathcal C}(X, RZ)
	\arrow[d, "\cong"]
	\\
	\mor_{\mathcal D} (LX, LY)
	\arrow[r, phantom, "\times"]
	&
	\mor_{\mathcal D}(LY, Z)
	\arrow[r]
	&
	\mor_{\mathcal D}(LX, Z)
\end{tikzcd}
\]
commutes. Here, the first vertical arrow is given by application of $L$, the remaining vertical arrows are the isomorphisms of the adjunction, and the horizontal arrows are natural composition of morphisms.
\end{proposition}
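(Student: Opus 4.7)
The plan is to chase a generic pair $(f, g) \in \mor_{\mathcal C}(X,Y) \times \mor_{\mathcal C}(Y, RZ)$ around both ways of the diagram and show the two outputs agree by unfolding the adjunction isomorphism in terms of the counit $\varepsilon\colon LR \Rightarrow \id_{\mathcal D}$. Recall that the adjunction isomorphism sends $h\colon X \to RZ$ to $\varepsilon_Z \circ Lh \colon LX \to Z$, and this formula is all we need: naturality in $X$ is automatic since it is literally composition with $\varepsilon_Z$ after applying the functor $L$.

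First I would fix such $(f,g)$ and compute the top-then-right path: the composition in $\mathcal C$ yields $g \circ f \colon X \to RZ$, and the right vertical isomorphism converts this to
\[
\varepsilon_Z \circ L(g \circ f) \;=\; \varepsilon_Z \circ Lg \circ Lf \;\in\; \mor_{\mathcal D}(LX, Z),
\]
where we used only functoriality of $L$. Next I would compute the down-then-bottom path: the left vertical arrow produces $(Lf, \varepsilon_Z \circ Lg)$, and composition in $\mathcal D$ gives $(\varepsilon_Z \circ Lg) \circ Lf$. The two expressions are literally the same morphism in $\mathcal D$, which is exactly the desired commutativity of the right square.

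For the left square, one must verify compatibility of the product decorations with $L$ on the first factor and with the adjunction isomorphism on the second factor; this is tautological because the left square only records that we have applied $L$ to the first factor and the adjunction isomorphism to the second, while the product on both rows is just the Cartesian product of sets of morphisms. Thus the only content of the proposition is the associativity-plus-functoriality chain displayed above.

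The main (and only) subtle point is to remember to use the explicit formula for the adjunction isomorphism rather than treat it abstractly; once we do, the proof reduces to a one-line application of the functoriality $L(g \circ f) = Lg \circ Lf$. There is no real obstacle, so the write-up can be kept short: state the formula for the adjunction, apply it to both sides, and observe that functoriality of $L$ makes them equal.
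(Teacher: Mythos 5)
Your proposal is correct and is essentially the paper's own argument: the paper factors the diagram as a vertical concatenation of two squares (apply $L$ everywhere, then post-compose with the counit $\varepsilon_Z$), which amounts to exactly the same computation $\varepsilon_Z \circ L(g\circ f) = (\varepsilon_Z \circ Lg)\circ Lf$ that you carry out by an element chase. No issues.
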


\begin{proof}
The desired diagram is a "concatenation" of two commutative squares
\[
\begin{tikzcd}
	\mor_{\mathcal C}(X, Y)
	\arrow[r, phantom, "\times"]
	\arrow[d, "L"]
	&
	\mor_{\mathcal C}(Y, RZ)
	\arrow[r]
	\arrow[d, "L"]
	&
	\mor_{\mathcal C}(X, RZ)
	\arrow[d, "L"]
	\\
	\mor_{\mathcal D} (LX, LY)
	\arrow[r, phantom, "\times"]
	\arrow[d, equal]
	&
	\mor_{\mathcal D}(LY, LRZ)
	\arrow[r]
	\arrow[d, "(\varepsilon_Z)_*"]
	&
	\mor_{\mathcal D}(LX, LRZ)
	\arrow[d, "(\varepsilon_Z)_*"]
	\\
	\mor_{\mathcal D}(LX, LY)
	\arrow[r, phantom, "\times"]
	&
	\mor_{\mathcal D}(LY,Z)
	\arrow[r]
	&
	\mor_{\mathcal D}(LX, Z),
\end{tikzcd}
\]
where $(\varepsilon_Z)_*\colon LRZ \to Z$ is the adjunction counit. Thus, the outer square is commutative.
\end{proof}

\end{document}